\title{Fattening Up Warning's Second Theorem}
\author{Pete L. Clark}
\begin{document}
\newtheorem{lemma}{Lemma}[section]
\newtheorem{prop}[lemma]{Proposition}
\newtheorem{cor}[lemma]{Corollary}
\newtheorem{thm}[lemma]{Theorem}
\newtheorem{ques}{Question}
\newtheorem{quest}[lemma]{Question}
\newtheorem{conj}[lemma]{Conjecture}
\newtheorem{fact}[lemma]{Fact}
\newtheorem*{mainthm}{Main Theorem}
\newtheorem{obs}[lemma]{Observation}
\newtheorem{hint}{Hint}
\newtheorem{remark}[lemma]{Remark}
\newtheorem{example}[lemma]{Example}
\newtheorem{exercise}{Exercise}

\maketitle

\begin{abstract}
We present a generalization of Warning's Second Theorem to polynomial systems over a finite local principal ring with suitably restricted input and output variables.  This generalizes a recent result with Forrow and Schmitt (and gives a new proof
of that result).  Applications to additive
group theory, graph theory and polynomial interpolation are pursued in detail.
\end{abstract}

\tableofcontents

\newcommand{\pp}{\mathfrak{p}}
\newcommand{\DD}{\mathcal{D}}
\newcommand{\F}{\ensuremath{\mathbb F}}
\newcommand{\Fp}{\ensuremath{\F_p}}
\newcommand{\Fl}{\ensuremath{\F_l}}
\newcommand{\Fpbar}{\overline{\Fp}}
\newcommand{\Fq}{\ensuremath{\F_q}}
\newcommand{\PP}{\mathbb{P}}
\newcommand{\PPone}{\mathfrak{p}_1}
\newcommand{\PPtwo}{\mathfrak{p}_2}
\newcommand{\PPonebar}{\overline{\PPone}}
\newcommand{\N}{\ensuremath{\mathbb N}}
\newcommand{\Q}{\ensuremath{\mathbb Q}}
\newcommand{\Qbar}{\overline{\Q}}
\newcommand{\R}{\ensuremath{\mathbb R}}
\newcommand{\Z}{\ensuremath{\mathbb Z}}
\newcommand{\SSS}{\ensuremath{\mathcal{S}}}
\newcommand{\Rn}{\ensuremath{\mathbb R^n}}
\newcommand{\Ri}{\ensuremath{\R^\infty}}
\newcommand{\C}{\ensuremath{\mathbb C}}
\newcommand{\Cn}{\ensuremath{\mathbb C^n}}
\newcommand{\Ci}{\ensuremath{\C^\infty}}\newcommand{\U}{\ensuremath{\mathcal U}}
\newcommand{\gn}{\ensuremath{\gamma^n}}
\newcommand{\ra}{\ensuremath{\rightarrow}}
\newcommand{\fhat}{\ensuremath{\hat{f}}}
\newcommand{\ghat}{\ensuremath{\hat{g}}}
\newcommand{\hhat}{\ensuremath{\hat{h}}}
\newcommand{\covui}{\ensuremath{\{U_i\}}}
\newcommand{\covvi}{\ensuremath{\{V_i\}}}
\newcommand{\covwi}{\ensuremath{\{W_i\}}}
\newcommand{\Gt}{\ensuremath{\tilde{G}}}
\newcommand{\gt}{\ensuremath{\tilde{\gamma}}}
\newcommand{\Gtn}{\ensuremath{\tilde{G_n}}}
\newcommand{\gtn}{\ensuremath{\tilde{\gamma_n}}}
\newcommand{\gnt}{\ensuremath{\gtn}}
\newcommand{\Gnt}{\ensuremath{\Gtn}}
\newcommand{\Cpi}{\ensuremath{\C P^\infty}}
\newcommand{\Cpn}{\ensuremath{\C P^n}}
\newcommand{\lla}{\ensuremath{\longleftarrow}}
\newcommand{\lra}{\ensuremath{\longrightarrow}}
\newcommand{\Rno}{\ensuremath{\Rn_0}}
\newcommand{\dlra}{\ensuremath{\stackrel{\delta}{\lra}}}
\newcommand{\pii}{\ensuremath{\pi^{-1}}}
\newcommand{\la}{\ensuremath{\leftarrow}}
\newcommand{\gonem}{\ensuremath{\gamma_1^m}}
\newcommand{\gtwon}{\ensuremath{\gamma_2^n}}
\newcommand{\omegabar}{\ensuremath{\overline{\omega}}}
\newcommand{\dlim}{\underset{\lra}{\lim}}
\newcommand{\ilim}{\operatorname{\underset{\lla}{\lim}}}
\newcommand{\Hom}{\operatorname{Hom}}
\newcommand{\Ext}{\operatorname{Ext}}
\newcommand{\Part}{\operatorname{Part}}
\newcommand{\Ker}{\operatorname{Ker}}
\newcommand{\im}{\operatorname{im}}
\newcommand{\ord}{\operatorname{ord}}
\newcommand{\unr}{\operatorname{unr}}
\newcommand{\B}{\ensuremath{\mathcal B}}
\newcommand{\Ocr}{\ensuremath{\Omega_*}}
\newcommand{\Rcr}{\ensuremath{\Ocr \otimes \Q}}
\newcommand{\Cptwok}{\ensuremath{\C P^{2k}}}
\newcommand{\CC}{\ensuremath{\mathcal C}}
\newcommand{\gtkp}{\ensuremath{\tilde{\gamma^k_p}}}
\newcommand{\gtkn}{\ensuremath{\tilde{\gamma^k_m}}}
\newcommand{\QQ}{\ensuremath{\mathcal Q}}
\newcommand{\I}{\ensuremath{\mathcal I}}
\newcommand{\sbar}{\ensuremath{\overline{s}}}
\newcommand{\Kn}{\ensuremath{\overline{K_n}^\times}}
\newcommand{\tame}{\operatorname{tame}}
\newcommand{\Qpt}{\ensuremath{\Q_p^{\tame}}}
\newcommand{\Qpu}{\ensuremath{\Q_p^{\unr}}}
\newcommand{\scrT}{\ensuremath{\mathfrak{T}}}
\newcommand{\That}{\ensuremath{\hat{\mathfrak{T}}}}
\newcommand{\Gal}{\operatorname{Gal}}
\newcommand{\Aut}{\operatorname{Aut}}
\newcommand{\tors}{\operatorname{tors}}
\newcommand{\Zhat}{\hat{\Z}}
\newcommand{\linf}{\ensuremath{l_\infty}}
\newcommand{\Lie}{\operatorname{Lie}}
\newcommand{\GL}{\operatorname{GL}}
\newcommand{\End}{\operatorname{End}}
\newcommand{\aone}{\ensuremath{(a_1,\ldots,a_k)}}
\newcommand{\raone}{\ensuremath{r(a_1,\ldots,a_k,N)}}
\newcommand{\rtwoplus}{\ensuremath{\R^{2  +}}}
\newcommand{\rkplus}{\ensuremath{\R^{k +}}}
\newcommand{\length}{\operatorname{length}}
\newcommand{\Vol}{\operatorname{Vol}}
\newcommand{\cross}{\operatorname{cross}}
\newcommand{\GoN}{\Gamma_0(N)}
\newcommand{\GeN}{\Gamma_1(N)}
\newcommand{\GAG}{\Gamma \alpha \Gamma}
\newcommand{\GBG}{\Gamma \beta \Gamma}
\newcommand{\HGD}{H(\Gamma,\Delta)}
\newcommand{\Ga}{\mathbb{G}_a}
\newcommand{\Div}{\operatorname{Div}}
\newcommand{\Divo}{\Div_0}
\newcommand{\Hstar}{\cal{H}^*}
\newcommand{\txon}{\tilde{X}_0(N)}
\newcommand{\sep}{\operatorname{sep}}
\newcommand{\notp}{\not{p}}
\newcommand{\Aonek}{\mathbb{A}^1/k}
\newcommand{\Wa}{W_a/\mathbb{F}_p}
\newcommand{\Spec}{\operatorname{Spec}}

\newcommand{\abcd}{\left[ \begin{array}{cc}
a & b \\
c & d
\end{array} \right]}

\newcommand{\abod}{\left[ \begin{array}{cc}
a & b \\
0 & d
\end{array} \right]}

\newcommand{\unipmatrix}{\left[ \begin{array}{cc}
1 & b \\
0 & 1
\end{array} \right]}

\newcommand{\matrixeoop}{\left[ \begin{array}{cc}
1 & 0 \\
0 & p
\end{array} \right]}

\newcommand{\w}{\omega}
\newcommand{\Qpi}{\ensuremath{\Q(\pi)}}
\newcommand{\Qpin}{\Q(\pi^n)}
\newcommand{\pibar}{\overline{\pi}}
\newcommand{\pbar}{\overline{p}}
\newcommand{\lcm}{\operatorname{lcm}}
\newcommand{\trace}{\operatorname{trace}}
\newcommand{\OKv}{\mathcal{O}_{K_v}}
\newcommand{\Abarv}{\tilde{A}_v}
\newcommand{\kbar}{\overline{k}}
\newcommand{\Kbar}{\overline{K}}
\newcommand{\pl}{\rho_l}
\newcommand{\plt}{\tilde{\pl}}
\newcommand{\plo}{\pl^0}
\newcommand{\Du}{\underline{D}}
\newcommand{\A}{\mathbb{A}}
\newcommand{\D}{\underline{D}}
\newcommand{\op}{\operatorname{op}}
\newcommand{\Glt}{\tilde{G_l}}
\newcommand{\gl}{\mathfrak{g}_l}
\newcommand{\gltwo}{\mathfrak{gl}_2}
\newcommand{\sltwo}{\mathfrak{sl}_2}
\newcommand{\h}{\mathfrak{h}}
\newcommand{\tA}{\tilde{A}}
\newcommand{\sss}{\operatorname{ss}}
\newcommand{\X}{\Chi}
\newcommand{\ecyc}{\epsilon_{\operatorname{cyc}}}
\newcommand{\hatAl}{\hat{A}[l]}
\newcommand{\sA}{\mathcal{A}}
\newcommand{\sAt}{\overline{\sA}}
\newcommand{\OO}{\mathcal{O}}
\newcommand{\OOB}{\OO_B}
\newcommand{\Flbar}{\overline{\F_l}}
\newcommand{\Vbt}{\widetilde{V_B}}
\newcommand{\XX}{\mathcal{X}}
\newcommand{\GbN}{\Gamma_\bullet(N)}
\newcommand{\Gm}{\mathbb{G}_m}
\newcommand{\Pic}{\operatorname{Pic}}
\newcommand{\FPic}{\textbf{Pic}}
\newcommand{\solv}{\operatorname{solv}}
\newcommand{\Hplus}{\mathcal{H}^+}
\newcommand{\Hminus}{\mathcal{H}^-}
\newcommand{\HH}{\mathcal{H}}
\newcommand{\Alb}{\operatorname{Alb}}
\newcommand{\FAlb}{\mathbf{Alb}}
\newcommand{\gk}{\mathfrak{g}_k}
\newcommand{\car}{\operatorname{char}}
\newcommand{\Br}{\operatorname{Br}}
\newcommand{\gK}{\mathfrak{g}_K}
\newcommand{\coker}{\operatorname{coker}}
\newcommand{\red}{\operatorname{red}}
\newcommand{\CAY}{\operatorname{Cay}}
\newcommand{\ns}{\operatorname{ns}}
\newcommand{\xx}{\mathbf{x}}
\newcommand{\yy}{\mathbf{y}}
\newcommand{\E}{\mathbb{E}}
\newcommand{\rad}{\operatorname{rad}}
\newcommand{\Top}{\operatorname{Top}}
\newcommand{\Map}{\operatorname{Map}}
\newcommand{\Li}{\operatorname{Li}}
\renewcommand{\Map}{\operatorname{Map}}
\newcommand{\ZZ}{\mathcal{Z}}
\newcommand{\uu}{\mathfrak{u}}
\newcommand{\mm}{\mathfrak{m}}
\newcommand{\zz}{\mathbf{z}}
\newcommand{\Aff}{\operatorname{Aff}}
\newcommand{\MaxSpec}{\operatorname{MaxSpec}}
\newcommand{\qq}{\mathfrak{q}}
\newcommand{\supp}{\operatorname{supp}}
\newcommand{\Lceil}{\bigg{\lceil}}
\newcommand{\Rceil}{\bigg{\rceil}}
\newcommand{\rr}{\mathfrak{r}}
\renewcommand{\aa}{\mathfrak{a}}
\newcommand{\bb}{\mathfrak{b}}
\newcommand{\GG}{\mathcal{G}}
\renewcommand{\AA}{{\bf{A}}}
\newcommand{\BB}{{\bf{B}}}
\newcommand{\YY}{{\bf{Y}}}
\renewcommand{\tt}{{\bf{t}}}
\renewcommand{\qq}{{\bf{q}}}

\noindent

\section{Introduction}

\subsection{Notation and Terminology}
\textbf{} \\ \\ \noindent
Let $n,a_1,\ldots,a_n \in \Z^+$ and let $1 \leq N \leq \sum_{i=1}^n a_i$.  As in \cite[$\S 2.1$]{Clark-Forrow-Schmitt14}, we put
\[ \mathfrak{m}(a_1,\ldots,a_n;N) = \begin{cases} 1 & N < n \\
\min
 \prod_{i=1}^n y_i & n \leq N \leq \sum_{i=1}^n a_i\end{cases};\]
the minimum is over $(y_1,\ldots,y_n) \in \Z^n$ with $y_i \in [1,a_i]$
for all $i$ and $\sum_{i=1}^n y_i = N$.
\\  \\
Let $R$ be a ring, $B \subset R$ a subset, $I$ an ideal of $R$, and $x \in R$.  We write ``$x \in B \pmod{I}$''
to mean that there is $b \in B$ such that $x-b \in I$.
\\ \\
Let $R$ be a ring.  As in \cite{Clark14}, we say a subset $A \subset R$ satisfies \textbf{Condition (F)} (resp. \textbf{Condition (D)}) if $A$ is nonempty, finite and for
any distinct elements $x,y \in A$, $x-y$ is a unit in $R$ (resp. is not a zero-divisor in $R$).

\subsection{Prior Results}
\textbf{} \\ \\ \noindent
We begin with the results of Chevalley and Warning.

\begin{thm}
\label{WARTHM}
Let $n,r,d_1,\ldots,d_r \in \Z^+$ with
$d := d_1 + \ldots + d_r < n$.
For $1 \leq i \leq r$, let
$f_i(t_1,\ldots,t_n) \in \F_q[\tt] = \F_q[t_1,\ldots,t_n]$ be a polynomial of degree $d_i$.  Let
\[ Z = Z(f_1,\ldots,f_r) = \{x \in \F_q^n \mid f_1(x) = \ldots = f_r(x) = 0\}. \]
a) (Chevalley's Theorem \cite{Chevalley35}) We have $\# Z = 0$ or $\# Z \geq 2$.  \\
b) (Warning's Theorem \cite{Warning35}) We have $\# Z \equiv 0 \pmod{p}$. \\
c) (Warning's Second Theorem \cite{Warning35}) We have $\# Z = 0 \text{ or } \# Z \geq q^{n-d}$.
\end{thm}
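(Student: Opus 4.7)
My plan is to derive all three parts from a single calculation: evaluating the characteristic polynomial of $Z$ on $\F_q^n$ and invoking the Alon--F\"uredi theorem on non-vanishing evaluations over a grid.

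The starting point is the polynomial $P(\tt) = \prod_{i=1}^{r} \bigl(1 - f_i(\tt)^{q-1}\bigr) \in \F_q[\tt]$. Since $a^{q-1}$ is the indicator of $\F_q^\times$, one has $P(x) = 1$ for $x \in Z$ and $P(x) = 0$ otherwise, so $\sum_{x \in \F_q^n} P(x) = \#Z$ while $\deg P \leq (q-1)d$. For (b) I would apply the elementary identity $\sum_{a \in \F_q} a^k = 0$ for $0 \leq k < q-1$, which forces any monomial of total degree less than $n(q-1)$ to sum to $0$ over $\F_q^n$; since $\deg P \leq (q-1)d < (q-1)n$, we obtain $\#Z \equiv 0 \pmod{p}$. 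Part (a) is immediate: $p \mid \#Z$ forces $\#Z = 0$ or $\#Z \geq p \geq 2$.

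For (c), which is the main point, I would invoke the Alon--F\"uredi theorem: if $g \in \F[\tt]$ has total degree $D$ and $A_1, \ldots, A_n \subseteq \F$ are finite subsets on which $g$ does not vanish identically, then $g$ has at least $\mathfrak{m}(\#A_1, \ldots, \#A_n;\, \sum_i \#A_i - D)$ non-zeros in $\prod_i A_i$. Taking $A_i = \F_q$ and $g = P$, I obtain $\#Z \geq \mathfrak{m}\bigl(q, \ldots, q;\, nq - (q-1)d\bigr)$. A short optimization---minimize $\prod y_i$ subject to $1 \leq y_i \leq q$ and $\sum y_i = nq - (q-1)d = (n-d)q + d$---is solved by pushing $y_i$ to the extremes: any interior pair $1 < y_i \leq y_j < q$ can be replaced by $(y_i - 1, y_j + 1)$, which preserves the sum and strictly decreases the product. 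At the extremal allocation, every $y_i \in \{1, q\}$, and the sum constraint forces exactly $d$ of them to equal $1$ and exactly $n - d$ of them to equal $q$, giving product $q^{n-d}$. Hence $\#Z = 0$ or $\#Z \geq q^{n-d}$.

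The real work is the Alon--F\"uredi theorem itself, and that is where I expect the main obstacle to lie. I would prove it by induction on $n$: first reduce $P$ modulo the ideal generated by $\prod_{a \in A_i}(t_i - a)$ to obtain a representative of degree strictly less than $\#A_i$ in each $t_i$ that agrees with $P$ on $\prod_i A_i$, then view this representative as a polynomial in $t_n$ over $\F[t_1, \ldots, t_{n-1}]$, extract a nonzero coefficient of top degree in $t_n$, and apply the inductive hypothesis to the associated slice. The $\mathfrak{m}$-bookkeeping across this induction is the only genuinely intricate step; once packaged correctly it reduces to the integer-optimization fact already used above.
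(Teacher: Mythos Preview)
Your argument is correct. Note, however, that the paper does not give its own proof of Theorem~\ref{WARTHM}: it is stated as classical background with citations to \cite{Chevalley35} and \cite{Warning35}, and the paper only remarks that Chevalley's proof of part (a) adapts to give part (b).

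That said, your approach to part (c) is exactly the mechanism underlying the paper's Main Theorem, specialized to the classical setting. The paper's proof of Theorem~\ref{MAINTHM} constructs the polynomial
\[
Q(\tt) \;=\; \prod_{j=1}^{r}\ \prod_{y \in \mathcal{S}(v_j)\setminus \overline{B_j}} \bigl(f_j(\tt)-y\bigr),
\]
which over $\F_q$ with $v_j=1$, $B_j=\{0\}$ is (up to a unit) your $P(\tt)=\prod_j\bigl(1-f_j(\tt)^{q-1}\bigr)$, since $\prod_{y\in\F_q^\times}(X-y)=X^{q-1}-1$. It then applies Alon--F\"uredi (Theorem~\ref{STRONGALONFUREDI}) to bound the non-vanishing set from below, just as you do. Your evaluation $\mathfrak{m}(q,\ldots,q;\,nq-(q-1)d)=q^{n-d}$ via the unbalancing move is the standard ``greedy configuration'' computation the paper imports from \cite[Lemma~2.2]{Clark-Forrow-Schmitt14}.

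One small divergence: for the proof of Alon--F\"uredi itself you outline the original inductive argument of \cite{Alon-Furedi93}, whereas the paper deliberately avoids that route (calling it ``a bit mysterious'') and instead proves it via a Ball--Serra punctured Nullstellensatz step (Step~1 of the proof of Theorem~\ref{STRONGALONFUREDI}). Both are valid; the Ball--Serra approach is what the paper considers more transparent and more amenable to the ring-theoretic generality it needs.
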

\noindent
Chevalley's proof of Theorem \ref{WARTHM}a) can be easily
modified to yield Theorem \ref{WARTHM}b).  Warning's real contribution was Theorem \ref{WARTHM}c), a result which has, I feel, been too little appreciated.
%
It is sharp in the following strong sense: for
any $d_1,\ldots,d_r \in \N$ with $d := d_1+\ldots+d_r < n$, there are $f_1,\ldots,f_r \in \F_q[\tt]$ with $\deg f_i = d_i$ for all $1 \leq i \leq r$ such that $\# Z(f_1,\ldots,f_r) = q^{n-d}$.  One can build
such examples by combining norm forms associated to field extensions $\F_{q^a}/\F_q$ and linear polynomials.  On the other hand, although in these
examples the equations are generally nonlinear, the solution sets are still
affine subspaces.  In \cite{Heath-Brown11}, Heath-Brown showed that under the hypotheses
of Theorem \ref{WARTHM}c), when $Z$ is nonempty and is not an affine subspace
of $\F_q^n$ one always has $\# Z > q^{n-d}$, and in fact $\# Z \geq 2 q^{n-d}$
for all $q \geq 4$.
\\ \\
Apart from \cite{Heath-Brown11} there had been little
further exploration of Theorem \ref{WARTHM}c) until \cite{Clark-Forrow-Schmitt14}, in which A. Forrow, J.R. Schmitt and I established the
following result.

\begin{thm}(Restricted Variable Warning's Second Theorem \cite{Clark-Forrow-Schmitt14})
\label{RVW2THM}
\label{RVW2T}
Let $K$ be a number field with ring of integers $R$, let $\pp$ be a nonzero prime ideal of $R$, and let $q = p^{\ell}$ be the prime power such that
$R/\pp \cong \F_q$.  Let $A_1,\ldots,A_n$ be nonempty subsets of $R$ such that for each $i$,
the elements of $A_i$ are pairwise incongruent modulo $\pp$, and put $\AA = \prod_{i=1}^n A_i$.  Let $r,v_1,\ldots,v_r \in \Z^+$.  Let $P_1,\ldots,P_r \in R[t_1,\ldots,t_n]$.  Let
\[ Z_{\AA} = \{ x \in \AA \mid P_j(x) \equiv 0 \pmod{\pp^{v_j}} \ \forall 1 \leq j \leq r\}, \ \zz_{\AA} = \# Z_{\AA}. \]
Then $\zz_{\AA} = 0$ or $\zz_{\AA} \geq \mm \left( \# A_1,\ldots, \# A_n; \# A_1 + \ldots + \# A_n -
\sum_{j=1}^r (q^{v_j} -1) \deg(P_j) \right)$.
\end{thm}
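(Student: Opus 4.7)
The approach is to derive the statement from an analogous Warning's Second Theorem over a finite local principal ideal ring, obtained by passing from $R$ to the quotient $S := R/\pp^v$ where $v := \max_j v_j$. The ring $S$ is a finite local PIR with maximal ideal $\pp S = \pp/\pp^v$, residue field $\F_q$, and cardinality $q^v$. Since the elements of each $A_i$ are pairwise incongruent modulo $\pp$, their images under the projection $R \to S$ are pairwise unit-differenced, so $A_i \subset S$ satisfies Condition (F). Moreover $P_j(\xx) \equiv 0 \pmod{\pp^{v_j}}$ in $R$ is equivalent to $P_j(\xx) \in \pp^{v_j} S$, so $Z_{\AA}$ is unchanged in passing to $S$.

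Having reduced to $S$, the plan is to construct a polynomial $F \in S[\tt]$ of degree at most $D := \sum_j (q^{v_j}-1) \deg P_j$ such that $U(F) := \{\xx \in \AA : F(\xx) \neq 0\}$ is a nonempty subset of $Z_{\AA}$ whenever $Z_{\AA}$ itself is nonempty. Granting such an $F$, the Alon--F\"uredi Combinatorial Nullstellensatz for polynomials over rings with Condition (F) restricted input sets (cf.\ \cite{Clark14}) yields
\[ |Z_{\AA}| \; \geq \; |U(F)| \; \geq \; \mm\!\left(a_1, \ldots, a_n;\ \textstyle\sum_i a_i - D\right), \]
which is the desired bound. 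The natural candidate is $F(\tt) := \prod_{j=1}^r \phi_j(P_j(\tt))$, where $\phi_j \in S[y]$ is an ``ideal-indicator'' of degree $q^{v_j}-1$: a polynomial satisfying $\phi_j(y) = 0$ on $S \setminus \pp^{v_j} S$ and $\phi_j(y) \neq 0$ for at least one $y \in \pp^{v_j} S$.

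The construction of $\phi_j$ is the technical heart of the argument and the step I expect to be the main obstacle. In the classical case $v_j = 1$ and $S = \F_q$, the indicator $\phi_j(y) = 1 - y^{q-1}$ does the job with the right degree $q-1$, and one recovers a restricted-variable Warning's Second Theorem essentially as in \cite{Clark-Forrow-Schmitt14}. For $v_j \geq 2$ the ring $S$ contains nonzero nilpotents: the brute-force interpolating product $\prod_{c \in S \setminus \pp^{v_j} S}(y-c)$ has degree $q^{v-v_j}(q^{v_j}-1)$, which exceeds the target $q^{v_j}-1$, while Lagrange-style shortcuts are obstructed because distinct elements of $S$ need not differ by a unit. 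Producing $\phi_j$ of degree exactly $q^{v_j}-1$ with the required vanishing --- the ``fattening up'' of Warning's indicator promised in the title --- is the new content, and I expect it to exploit the multiplicative structure of $(R/\pp^{v_j})^\times$ together with the $\pi$-adic filtration on $S$, using nilpotent cancellation to absorb degree. Once the $\phi_j$ are constructed, composing with $P_j$, taking the product, and invoking Alon--F\"uredi finishes the proof.
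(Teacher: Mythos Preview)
Your overall architecture --- pass to a finite local principal quotient, manufacture a single test polynomial $F$ of degree $\sum_j (q^{v_j}-1)\deg P_j$ whose nonvanishing locus on $\AA$ is $Z_\AA$, then invoke Alon--F\"uredi over a ring --- is exactly the paper's route (its new proof of Theorem~\ref{RVW2T} is the $B_j=\{0\}$ case of the proof of Theorem~\ref{MAINTHM} in \S2.3). So the framework is right.

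The gap is in the step you flag as the obstacle, and it is a real one. Two points:

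First, a small but fatal slip: requiring only that $\phi_j(y)\neq 0$ for \emph{at least one} $y\in \pp^{v_j}S$ does not make $U(F)$ nonempty when $Z_\AA$ is nonempty, because the values $P_j(x)$ for $x\in Z_\AA$ need not hit that particular $y$. You need $\phi_j$ nonzero on \emph{all} of $\pp^{v_j}S$, so that $U(F)=Z_\AA$.

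Second, and more substantively, your target ring is wrong. You look for a genuine $0$/nonzero indicator $\phi_j$ of degree $q^{v_j}-1$ inside $S=R/\pp^{v}$. The paper does not do this, and your own degree count shows why the naive product fails. What the paper does instead is take
\[
\phi_j(y)\;=\;\prod_{z\in \mathcal{S}(v_j)\setminus\{0\}}(y-z),
\]
a product over $q^{v_j}-1$ \emph{coset representatives} of $\pp^{v_j}$ (so the degree is already correct), and then proves a valuation statement (the generalized Alon--Friedland--Kalai Lemma, Lemma~\ref{GENERALSAFKLEMMA}): one always has $\ord_\pp\phi_j(y)\geq c(v_j):=\sum_{i=1}^{v_j-1}(q^i-1)$, with equality \emph{if and only if} $y\equiv 0\pmod{\pp^{v_j}}$. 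Setting $c=\sum_j c(v_j)$ and working in $\overline R=R/\pp^{\,c+1}$ (not $R/\pp^v$), the image $\overline F$ of $F=\prod_j\phi_j(P_j)$ then satisfies $\overline F(\overline x)\neq 0 \iff x\in Z_\AA$, and Alon--F\"uredi over $\overline R$ finishes. In short: the indicator you want does not live at the level of values in $S$; it lives at the level of $\pp$-adic valuations, and one must pass to the deeper quotient $R/\pp^{\,c+1}$ to read it off. That valuation lemma is the missing idea.
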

\noindent
This generalizes Theorem \ref{WARTHM}c) in two directions: first,
instead of working over finite fields, we work modulo
powers of a prime ideal in the ring of integers of a number field.  In the case $K = \Q$ we are studying systems of congruence modulo (varying) powers of a (fixed) prime $p$.  Second, we study solutions
in which each variable is independently restricted to a finite subset of $\Z_K$ satisfying the condition that no two distinct elements are congruent modulo $\pp$.
\\ \indent
These extensions appear already in work of Schanuel \cite{Schanuel74}, Baker-Schmidt \cite{Baker-Schmidt80}, Schauz \cite{Schauz08a}, Wilson \cite{Wilson06} and Brink \cite{Brink11}.  They are largely motivated by applications to combinatorics.  For combinatorial applications we work over $K = \Q$ and get congruences modulo powers of $p$.  The most classical applications concern the case in which each variable is restricted to take values $0$ and $1$.  More recently there has been a surge of interest in more general subsets $A_i$: this yields weighted analogues
of the more classical combinatorial problems.
\\ \indent
The previous works
used either \emph{ad hoc} methods or Alon's Combinatorial Nullstellensatz and yielded \emph{nonuniqueness theorems}: results with conclusion ``there cannot be exactly one solution''.  To prove Theorem \ref{RVW2T} we instead applied the Alon-F\"uredi Theorem, which yields a lower bound on the number of solutions in terms of the quantity $\mm(a_1,\ldots,a_n;N)$.  To collapse this type of result to a nonuniqueness
theorem one simply uses the Pigeonhole Principle
\begin{equation}
\label{PHP}
\mm(a_1,\ldots,a_n;N) \geq 2 \iff
N > n.
\end{equation}
Applying (\ref{PHP}) to Theorem \ref{RVW2T}, one recovers a result of Brink.

\begin{cor}(Brink \cite{Brink11})
\label{NFBRINKTHM}
Let $K$ be a number field with ring of integers $R$, let $\pp$ be a nonzero prime ideal of $R$, and let $q = p^{\ell}$ be the prime power such that
$R/\pp \cong \F_q$.  Let $P_1(t_1,\ldots,t_n),\ldots,P_r(t_1,\ldots,t_n) \in R[t_1,\ldots,t_n]$, let $v_1,\ldots,v_r \in \Z^+$, and
let $A_1,\ldots,A_n$ be nonempty subsets of $R$ such that for each $i$,
the elements of $A_i$ are pairwise incongruent modulo $\pp$, and put
$\AA = \prod_{i=1}^n A_i$.  Let
\[ Z_{\AA} = \{x \in {\AA} \mid P_j(x) \equiv 0 \pmod{\pp^{v_j}} \ \forall 1 \leq j \leq r \}, \ \zz_{\AA} = \# Z_{\AA}. \]
 If $\sum_{j=1}^r (q^{v_j}-1)\deg(P_j) < \sum_{i=1}^n \left( \#A_i - 1 \right)$, then $\zz_{\AA} \neq 1$. 
\end{cor}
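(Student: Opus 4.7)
The plan is to deduce this as an immediate corollary of Theorem \ref{RVW2T} together with the pigeonhole equivalence (\ref{PHP}), exactly as the text preceding the statement advertises. The strategy is: apply Theorem \ref{RVW2T} to get a lower bound on $\zz_{\AA}$ (assuming $\zz_{\AA} \neq 0$) in terms of the combinatorial quantity $\mm$, and then show that the hypothesis on the degrees forces the second argument of $\mm$ to exceed $n$, which by (\ref{PHP}) forces the bound to be at least $2$.

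First, I would verify that the hypotheses of the corollary are exactly those of Theorem \ref{RVW2T} (so that theorem applies without modification). Setting
\[ N = \sum_{i=1}^n \#A_i - \sum_{j=1}^r (q^{v_j}-1)\deg(P_j), \]
Theorem \ref{RVW2T} gives $\zz_{\AA} = 0$ or $\zz_{\AA} \geq \mm(\#A_1,\ldots,\#A_n;N)$.

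Second, I would unpack the arithmetic of the hypothesis. The inequality
\[ \sum_{j=1}^r (q^{v_j}-1)\deg(P_j) < \sum_{i=1}^n (\#A_i - 1) = \sum_{i=1}^n \#A_i - n \]
rearranges directly to $N > n$. Applying (\ref{PHP}) then yields $\mm(\#A_1,\ldots,\#A_n;N) \geq 2$, so the alternative provided by Theorem \ref{RVW2T} becomes $\zz_{\AA} = 0$ or $\zz_{\AA} \geq 2$, i.e.\ $\zz_{\AA} \neq 1$.

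There is no genuine obstacle here: all the substantive content is concentrated in Theorem \ref{RVW2T} (and hence ultimately in the Alon--F\"uredi bound it rests on). The only nontrivial step one has to be slightly careful about is that $N$ genuinely falls in the range $n \leq N \leq \sum_i \#A_i$ where the nontrivial branch of the definition of $\mm$ is in force; but $N > n$ is immediate from the hypothesis, and $N \leq \sum_i \#A_i$ follows from positivity of $\sum_j (q^{v_j}-1)\deg(P_j)$. Thus the corollary is just a translation of Theorem \ref{RVW2T} through the pigeonhole identity.
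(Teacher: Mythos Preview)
Your argument is correct and matches the paper's own derivation exactly: the text immediately preceding the corollary says that applying (\ref{PHP}) to Theorem \ref{RVW2T} recovers Brink's result, and your write-up simply spells out that one-line deduction. The only microscopic quibble is that $N \leq \sum_i \#A_i$ needs only nonnegativity (not strict positivity) of $\sum_j (q^{v_j}-1)\deg(P_j)$, but this does not affect the argument.
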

\noindent
The case of $K = \Q$ was independently (in fact, earlier) established  by U. Schauz and R. Wilson, so we call this result the \textbf{Schauz-Wilson-Brink Theorem}.  If we further specialize to $A_i = \{0,1\}$ for all $i$ we recover
\textbf{Schanuel's Theorem}.

\subsection{The Main Theorem}
\textbf{} \\ \\ \noindent
For the convenience of readers who are primarily interested in combinatorial applications, we state the main result of this paper first in a special case.


\begin{thm}
\label{MAINTHMZ}
Let $p$ be a prime, let $n,r,v \in \Z^+$, and for $1 \leq i \leq r$,
let $1 \leq v_ j \leq v$.  Let
$A_1,\ldots,A_n,B_1,\ldots,B_r \subset \Z/p^v \Z$ be nonempty subsets each having the property that no two distinct elements are congruent modulo $p$.
Let $f_1,\ldots,f_r \in \Z/p^v\Z[\tt] = \Z/p^v\Z[t_1,\ldots,t_n]$.
Let
\[ Z_{\AA}^{\BB} = \{x \in \prod_{i=1}^n A_i \mid \forall 1 \leq j \leq r, \ f_j(x) \in B_j \pmod{p^{v_j}} \}. \]
Then $Z_{\AA}^{\BB} = \varnothing$ or
\[ \# Z_{\AA}^{\BB} \geq \mm\left(\# A_1,\ldots,\# A_n; \sum_{i=1}^n \# A_i - \sum_{j=1}^r
(p^{v_j}-\# B_j) \deg f_j \right) . \]
\end{thm}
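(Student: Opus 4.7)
The strategy is to extend the Alon--F\"uredi-based approach used in Theorem \ref{RVW2T} to accommodate output restrictions. The ambient ring $R := \Z/p^v\Z$ is a finite local principal ring with maximal ideal $\pp = (p)$ and residue field $\F_p$, which is the natural setting. The key construction is a polynomial $F \in R[\tt]$ encoding the joint conditions: for each $j$, let $C_j \subset R$ be a set of $p^{v_j}-\#B_j$ lifts to $R$ of the residue classes in $\Z/p^{v_j}\Z$ not hit by $B_j$, and set
\[
F_j(\tt) = \prod_{c \in C_j}\bigl(f_j(\tt)-c\bigr), \qquad F(\tt) = \prod_{j=1}^{r} F_j(\tt),
\]
so $\deg F = \sum_j (p^{v_j}-\#B_j)\deg f_j$. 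If $x \in \AA$ violates the $j$-th condition then $f_j(x) \equiv c \pmod{p^{v_j}}$ for the unique $c \in C_j$ in its class, making $F_j(x)$, and hence $F(x)$, divisible by $p^{v_j}$.

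The next step is to invoke a generalized Alon--F\"uredi theorem for polynomials over the local principal ring $R$ with inputs restricted to sets satisfying Condition (F). Such a theorem would state: for any $G \in R[\tt]$ not identically $\equiv 0 \pmod{p^v}$ on $\AA$, the count $\#\{x \in \AA : G(x) \not\equiv 0 \pmod{p^v}\}$ is at least $\mm(\#A_1,\ldots,\#A_n;\sum \#A_i - \deg G)$. Its proof would exploit the bijection $\AA \to \bar\AA \subset \F_p^n$ afforded by Condition (F) on the $A_i$ to reduce matters to the classical Alon--F\"uredi over $\F_p$, applied either to the reduction $\bar G$ directly or to the successive $p$-adic components of $G$.

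The main obstacle is verifying that $\{x \in \AA : F(x) \not\equiv 0 \pmod{p^v}\}$ is nonempty whenever $Z_\AA^\BB$ is. Unlike over a field, non-unit factors in $F_j$ at a ``good'' point $x$ --- arising from those $c \in C_j$ sharing a residue class mod $p$ with $f_j(x)$ --- can accumulate enough $p$-adic valuation to force $F(x) \equiv 0 \pmod{p^v}$. (Indeed, already in $\Z/9\Z$ the naive product $\prod_{c=1}^{8}(0-c) = 8!$ vanishes mod $9$.) Condition (F) on each $B_j$, which ensures that distinct elements differ by units, must be leveraged to control this accumulation, either by a strategic choice of the representative sets $C_j$ or by strengthening Alon--F\"uredi to count points where $F$ has valuation below a refined threshold $w < v$. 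A threshold tuned so that ``bad-point'' vanishing exactly drops $F(x)$ below $p^w$ while ``good-point'' values remain above $p^w$ should then deliver the claimed bound $\#Z_\AA^\BB \geq \mm(\#A_1,\ldots,\#A_n; \sum \#A_i - \sum_j (p^{v_j}-\#B_j)\deg f_j)$.
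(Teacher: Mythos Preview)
Your outline follows the paper's approach quite closely: the polynomial $F$ you construct is exactly the paper's $Q$, and you correctly isolate the central difficulty, namely controlling the $p$-adic valuation of $F$ at ``good'' points. But the proposal remains a sketch with a genuine gap at precisely this point, and one of your suggested fixes points in the wrong direction.

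The missing ingredient is a sharp valuation lemma (the paper's Generalized Alon--Friedland--Kalai Lemma). Working in a DVR $R$ with uniformizer $p$, one shows that for each $j$ the product $\prod_{c \in C_j}(f_j(x)-c)$ has $p$-adic valuation \emph{exactly} $c(v_j) := \sum_{i=1}^{v_j-1}(p^i-1)$ when $f_j(x) \in B_j \pmod{p^{v_j}}$, and \emph{strictly greater} otherwise. Condition~(F) on $B_j$ is what makes the ``exactly'' work: at most one $c \in C_j$ can share a residue class mod $p$ with $f_j(x)$, so removing the $B_j$-factors from the full product over a complete set of representatives does not change the valuation. This gives the threshold $c := \sum_j c(v_j)$: for $x \in \AA$, one has $\ord_p F(x) \leq c$ if and only if $x \in Z_{\AA}^{\BB}$.

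Your guess that the threshold is some $w < v$ is backwards: in general $c+1$ is much larger than $v$, so the argument cannot be carried out inside $\Z/p^v\Z$ at all. One must first lift the data to $\Z$ (or $\Z_p$), form $F$ there, and then pass to $\overline{R} = \Z/p^{c+1}\Z$. Likewise, the idea of reducing Alon--F\"uredi to $\F_p$ via the bijection $\AA \to \bar\AA$ is not what is needed: the paper applies Alon--F\"uredi over the ring $\overline{R}$ directly (Condition~(F) on the $A_i$ implies Condition~(D) in $\overline{R}$, which is what the ring version of Alon--F\"uredi requires). Once these two pieces are in place, the nonvanishing set of $\overline{F}$ on $\overline{\AA}$ coincides with $Z_{\AA}^{\BB}$, and Alon--F\"uredi over $\overline{R}$ gives the bound.
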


\begin{cor}
\label{MAINCORZ}
Maintain the setup of Theorem \ref{MAINTHMZ}. \\
a) If $A_i = \{0,1\}$ for all $1 \leq i \leq n$, then $Z_{\AA}^{\BB} = \varnothing$ or
\[ \# Z_{\AA}^{\BB} \geq 2^{n-\sum_{j=1}^r (p^{v_j} -\# B_j) \deg(f_j)}. \]
b) If $A_i = \{0,1\}$ for all $i$ and $f_j(0) = 0 \in B_j$ for all $j$, there is $0 \neq x \in Z_{\AA}^{\BB}$
if
\[n > \sum_{j=1}^r (p^{v_j} -\# B_j) \deg(f_j). \]
\end{cor}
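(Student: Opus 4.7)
The plan is to derive both parts of Corollary \ref{MAINCORZ} as essentially immediate consequences of Theorem \ref{MAINTHMZ}, so the only real work lies in evaluating the combinatorial quantity $\mm$ in the special case $a_1 = \ldots = a_n = 2$.

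For part (a), I would apply Theorem \ref{MAINTHMZ} with $A_i = \{0,1\}$ so that $\#A_i = 2$ and $\sum_{i=1}^n \#A_i = 2n$. Writing $S = \sum_{j=1}^r (p^{v_j} - \#B_j)\deg(f_j)$, the theorem gives either $Z_{\AA}^{\BB} = \varnothing$ or $\#Z_{\AA}^{\BB} \geq \mm(2,\ldots,2;\,2n - S)$, so it suffices to show $\mm(2,\ldots,2;N) \geq 2^{N-n}$ in the relevant range of $N$. Unwinding the definition: any admissible tuple $(y_1,\ldots,y_n)$ has each $y_i \in \{1,2\}$, and if $k$ denotes the number of indices with $y_i = 2$ then $\sum y_i = n + k$, forcing $k = N - n$ whenever $n \leq N \leq 2n$. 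In that case $\prod y_i = 2^k = 2^{N-n}$ is uniquely determined (hence equal to the minimum), so $\mm(2,\ldots,2;N) = 2^{N-n}$. Plugging in $N = 2n - S$ gives exactly the claimed lower bound $2^{n-S}$. If instead $S > n$, then $N < n$ and $\mm = 1$, but in that regime $2^{n-S} \leq 1$, so $\#Z_{\AA}^{\BB} \geq 1 \geq 2^{n-S}$ holds trivially whenever $Z_{\AA}^{\BB}$ is nonempty.

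For part (b), the extra hypothesis $f_j(0) = 0 \in B_j$ together with $0 \in A_i$ shows that $x = (0,\ldots,0)$ satisfies $f_j(x) = 0 \equiv 0 \pmod{p^{v_j}}$ with $0 \in B_j$, so $0 \in Z_{\AA}^{\BB}$ and in particular $Z_{\AA}^{\BB}$ is nonempty. Part (a) then yields $\#Z_{\AA}^{\BB} \geq 2^{n-S}$, and the assumption $n > S$ gives $2^{n-S} \geq 2$, forcing the existence of a solution distinct from $0$.

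The main obstacle is essentially nil: this is a routine specialization and a one-line evaluation of $\mm$. The only thing to be careful about is handling the two branches in the definition of $\mm$ (the $N < n$ case versus the $N \geq n$ case) so that the stated inequality holds in all regimes of $S$, including the degenerate ones where the bound is weaker than what $\#Z_{\AA}^{\BB} \geq 1$ already provides.
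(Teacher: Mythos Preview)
Your proposal is correct and follows the same approach as the paper: apply Theorem \ref{MAINTHMZ} with $A_i=\{0,1\}$, evaluate $\mm(2,\ldots,2;2n-S)=2^{n-S}$, and for part (b) observe that $0\in Z_{\AA}^{\BB}$ so the lower bound forces a second solution. The paper simply cites the evaluation of $\mm(2,\ldots,2;\cdot)$ from \cite{Clark-Forrow-Schmitt14} and invokes the pigeonhole inequality (\ref{PHP}) rather than writing out the computation and the degenerate case $S>n$ as you do, but the argument is the same.
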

\begin{proof}
Applying Theorem \ref{MAINTHMZ} in the case $A_1 = \ldots = A_n = \{0,1\}$ and using the fact that for any $0 \leq k \leq n$, we have $\mm(2,\ldots,2;2n-k) = 2^{n-k}$  \cite[Lemma 2.2c)]{Clark-Forrow-Schmitt14}, we get
part a).  Combining with (\ref{PHP}) we get part b).
\end{proof}
\noindent
If in Corollary \ref{MAINCORZ}b) we further require that all the polynomials
are linear, we recover a result of Alon-Friedland-Kalai \cite[Thm. A.1]{Alon-Friedland-Kalai84}.  For some (not all) combinatorial
applications linear polynomials are sufficient, and
$A_i = \{0,1\}$ corresponds to the ``unweighted'' combinatorial setup.
In this setting we see that the advantage of Corollary \ref{MAINCORZ}a) over part b) is directly analogous to that of Theorem \ref{RVW2T} over Brink's Theorem, namely a quantitative refinement of Alon-F\"uredi type.  In fact this gives an accurate glimpse of our method of proof of the
Main Theorem: we will establish and apply suitably generalized versions of a valuation-theoretic lemma of Alon-Friedland-Kalai and of the Alon-F\"uredi
Theorem.
\\ \\
To state the full version of the Main Theorem we need some algebraic preliminaries.  A \textbf{principal ring} is a commutative ring in which
every ideal is principal.  A ring is \textbf{local} if it has exactly one
maximal ideal.  Let $(\rr,\pp)$ be a local principal ring with maximal
ideal $\pp = (\pi)$.  By Nakayama's Lemma, $\bigcap_{i \geq 0} \pp^i = (0)$, so
for every nonzero $x \in \rr$, there is a unique $i \in \N$ such that
$x \in \pp^i \setminus \pp^{i+1}$, so $x = \pi^i y$ and $y$ is a unit
in $\rr$, so $(x) = (\pi^i) = \pp^i$.  Thus every nonzero ideal of $\rr$
is of the form $\pp^i$ for some $i \in \N$.  There are two possibilities:
\\ \\
(i) For all $a \in \Z^+$, $\pp^a \neq 0$.  Then $\rr$ is a DVR. \\
(ii) There is a positive integer $v$, the \textbf{length} of $\rr$, such that $\pp^{v-1} \neq (0)$ and $\pp^{v} = (0)$.
\\ \\
If $\rr$ is moreover finite then (ii) must hold.  Thus in any (nonzero) finite principal ring $(\rr,\pp)$ there is a positive integer $v$ such that the ideals of $\rr$ are \[\rr = \pp^0 \supsetneq \pp \supsetneq \pp^1 \supsetneq \ldots \supsetneq \pp^v = (0). \]

\begin{thm}
\label{MAINTHMLOCAL}
Let $(\rr,\pp)$ be a finite local ring of length $v$ and with residue field
$\rr/\pp \cong \F_q$.  Let $n,r \in \Z^+$, and for $1 \leq j \leq r$, let
$1 \leq v_j \leq v$.  Let $\aa_1,\ldots,\aa_n,\bb_1,\ldots,\bb_r \subset \rr$ be nonempty
subsets each having the property that no two distinct elements are congruent modulo $\pp$.  Let $f_1,\ldots,f_r \in \rr[\tt] = \rr[t_1,\ldots,t_n]$.
Let
\[ \mathfrak{z}_{\aa,\bb} = \{x \in \prod_{i=1}^n \aa_i \mid \forall 1 \leq j \leq r, \ f_j(x) \in \bb_j \pmod{\pp^{v_j}} \}. \]
Then $\mathfrak{z}_{\aa,\bb} = \varnothing$ or
\[ \# \mathfrak{z}_{\aa,\bb} \geq \mm\left(\# \aa_1,\ldots,\# \aa_n; \sum_{i=1}^n \# \aa_i - \sum_{j=1}^r
(q^{v_j}-\# \bb_j) \deg f_j \right) . \]
\end{thm}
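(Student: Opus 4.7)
The plan is to extend the approach of Theorem \ref{RVW2T} from zero-target to fattened-target conditions, combining two generalized ingredients. First, I would establish a version of the Alon-F\"uredi theorem over $\rr$: if $F \in \rr[\tt]$ has total degree $d$ and its reduction $\bar F \in \F_q[\tt]$ is not identically zero on $\bar \aa := \prod_i (\aa_i \bmod \pp)$, then
\[
\#\{\xx \in {\textstyle\prod_i} \aa_i : F(\xx) \notin \pp\} \geq \mm\!\left(\#\aa_1, \dots, \#\aa_n; {\textstyle\sum_i} \#\aa_i - d\right).
\]
This reduces to the classical Alon-F\"uredi theorem over $\F_q$ applied to $\bar F$: the hypothesis that each $\aa_i$ consists of elements pairwise incongruent modulo $\pp$ ensures $\#\bar\aa_i = \#\aa_i$, and reduction mod $\pp$ preserves the degree bound.

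Second, for each $j$ I would construct an indicator polynomial $G_j \in \rr[\tt]$ of degree at most $(q^{v_j}-\#\bb_j)\deg f_j$ satisfying
\[ f_j(\xx) \in \bb_j \pmod{\pp^{v_j}} \iff G_j(\xx) \notin \pp \quad \text{for all } \xx \in {\textstyle\prod_i}\aa_i. \]
This is the fattened analogue of the Alon-Friedland-Kalai valuation lemma. The natural starting point is the punctured product $G_j(\tt) = \prod_{c \in C_j}(f_j(\tt) - c)$, where $C_j \subset \rr$ is a system of representatives for the $(q^{v_j} - \#\bb_j)$ classes in $(\rr/\pp^{v_j}) \setminus \bb_j$; its degree is the desired $(q^{v_j}-\#\bb_j)\deg f_j$. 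Identifying the residue-field condition that corresponds to $G_j(\xx) \notin \pp$ requires a careful valuation-theoretic analysis, tracking the $\pp$-adic contributions of each factor $f_j(\xx) - c$ as $c$ ranges over $C_j$ and $\xx$ over $\prod_i \aa_i$.

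With both tools established, set $F = \prod_{j=1}^r G_j$, a polynomial of degree $\sum_j (q^{v_j}-\#\bb_j)\deg f_j$ whose residue-field indicator property gives $\mathfrak{z}_{\aa,\bb} = \{\xx \in {\textstyle\prod_i}\aa_i : F(\xx) \notin \pp\}$. If $\mathfrak{z}_{\aa,\bb} = \varnothing$ the theorem is trivial; otherwise $\bar F$ is not identically zero on $\bar\aa$, and the generalized Alon-F\"uredi bound applied to $F$ yields the claim.

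The main obstacle is the indicator construction for $v_j > 1$: because distinct elements of $C_j$ can share their residue modulo $\pp$ with elements of $\bb_j$, the naive product $\prod_{c \in C_j}(f_j(\xx) - c)$ typically lies in $\pp$ even when $f_j(\xx) \in \bb_j \pmod{\pp^{v_j}}$, so the valuation-theoretic lemma must carefully untangle the higher-order $\pp$-adic contributions (possibly by replacing offending factors with unit corrections, or by a Hensel-type decomposition of $C_j$) so as to produce a genuine residue-field indicator of the sharp degree $(q^{v_j}-\#\bb_j)\deg f_j$.
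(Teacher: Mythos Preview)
Your overall architecture---a valuation-theoretic indicator lemma combined with an Alon--F\"uredi count---matches the paper's, and your Step 1 (reducing Alon--F\"uredi over $\rr$ to the classical statement over $\F_q$ via the bijection $\aa_i \to \bar\aa_i$) is sound. The gap is in Step 2, and it is exactly the obstacle you flag but do not resolve.

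You want $G_j \in \rr[\tt]$ with $G_j(\xx) \notin \pp \iff f_j(\xx) \in \bb_j \pmod{\pp^{v_j}}$. But ``$G_j(\xx) \notin \pp$'' depends only on $\bar G_j(\bar\xx)$, so you are really asking for a polynomial over $\F_q$ that detects a condition depending on $\xx$ modulo $\pp^{v_j}$. On the finite grid $\prod_i \aa_i$ this is not a logical impossibility (the grid injects into $\F_q^n$), but producing such a polynomial of degree at most $(q^{v_j}-\#\bb_j)\deg f_j$ is the entire difficulty, and neither ``unit corrections'' nor a ``Hensel-type decomposition'' supplies a construction. The naive product $\prod_{c \in C_j}(f_j - c)$ has $\pp$-adic valuation at least $c(v_j) := \sum_{i=1}^{v_j-1}(q^i-1)$ at \emph{every} point, so its reduction mod $\pp$ is identically zero once $v_j \geq 2$; and you cannot divide out $\pi^{c(v_j)}$ in $\rr[\tt]$ since $\pi$ is not a unit.

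The paper's resolution is to abandon the mod-$\pp$ indicator entirely and instead work modulo $\pp^{c+1}$, where $c = \sum_j c(v_j)$. The generalized Alon--Friedland--Kalai lemma (Lemma~\ref{GENERALSAFKLEMMA}) shows that the naive product $Q = \prod_j \prod_{y \in \mathcal{S}(v_j) \setminus \overline{\bb_j}}(f_j - y)$ satisfies $\ord_\pp Q(\xx) \leq c$ precisely when $\xx \in \mathfrak{z}_{\aa,\bb}$; hence $\overline{Q}$ is a genuine nonvanishing indicator in $\overline{R} = R/\pp^{c+1}$. The crucial ingredient your plan lacks is that Alon--F\"uredi holds not just over fields but over any ring, provided the grids satisfy Condition (D)---which the $\aa_i$ do in $\overline{R}$, since pairwise incongruence mod $\pp$ forces differences to be units. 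This ring-theoretic Alon--F\"uredi (Theorem~\ref{STRONGALONFUREDI}) is what lets the naive product do the work without any correction; your Step 1, by reducing all the way to $\F_q$, throws away exactly the flexibility needed to make Step 2 go through.
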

\noindent
Consider the following variant of Theorem \ref{MAINTHMLOCAL}.

\begin{thm}
\label{MAINTHM}
Let $R$ be a Dedekind domain with maximal ideal $\pp$ and finite residue field
$R/\pp \cong \F_q$. Let $n,r,v_1,\ldots,v_r \in \Z^+$.  Let $A_1,\ldots,A_n,B_1,\ldots,B_r \subset R$ be nonempty subsets
each having the property that no two distinct elements are congruent modulo $\pp$. Let $r,v_1,\ldots,v_r \in \Z^+$.  Let $f_1,\ldots,f_r \in R[t_1,\ldots,t_n]$.  Put
\[ Z_{\AA}^{\BB} = \left\{ x \in \prod_{i=1}^n A_i \mid \forall 1 \leq j \leq r, f_j(x) \in B_j \pmod{\pp^{v_j}} \right\}.\]
Then $\# Z_{\AA}^{\BB} = 0$ or \[\# Z_{\AA}^{\BB} \geq \mm \left(\# A_1,\ldots,\# A_n; \sum_{i=1}^n  \# A_i
-\sum_{j=1}^r (q^{v_j} -\# B_j) \deg(f_j) \right).  \]
\end{thm}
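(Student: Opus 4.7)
The strategy is to reduce Theorem \ref{MAINTHM} to the finite local case (Theorem \ref{MAINTHMLOCAL}) by quotienting $R$ by a sufficiently high power of $\pp$. Set $v = \max_{1 \leq j \leq r} v_j$, let $\rr = R/\pp^v$, and let $\bar\pp = \pp/\pp^v$. The first (and only nontrivial) task is to check that $(\rr,\bar\pp)$ is a finite local principal ring of length $v$ with residue field $\F_q$. Since $R$ is Dedekind and $\pp$ is maximal (as $R/\pp \cong \F_q$), the ideals of $\rr$ correspond to the ideals of $R$ containing $\pp^v$, which by unique factorization form the chain $R \supsetneq \pp \supsetneq \pp^2 \supsetneq \cdots \supsetneq \pp^v$. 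Hence $\rr$ is local of length $v$, principal as a quotient of the DVR $R_\pp$, and finite because each $\pp^i/\pp^{i+1}$ is a one-dimensional $\F_q$-vector space, so $\#\rr = q^v$.

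Next I would translate the data. Let $\phi \colon R \twoheadrightarrow \rr$ be the quotient map and put $\bar A_i = \phi(A_i)$, $\bar B_j = \phi(B_j)$, and let $\bar f_j \in \rr[\tt]$ be the coefficient-wise reduction of $f_j$. Since the elements of each $A_i$ (resp.\ $B_j$) are pairwise incongruent modulo $\pp$ in $R$ and $\phi^{-1}(\bar\pp) = \pp$, the restricted maps $A_i \to \bar A_i$ and $B_j \to \bar B_j$ are bijections whose images remain pairwise incongruent modulo $\bar\pp$. The product bijection $\prod_i A_i \to \prod_i \bar A_i$ then identifies $Z_\AA^\BB$ with the solution set of the reduced problem over $\rr$, because (using $v_j \leq v$) the congruence $f_j(x) \in B_j \pmod{\pp^{v_j}}$ in $R$ is equivalent to $\bar f_j(\bar x) \in \bar B_j \pmod{\bar\pp^{v_j}}$ in $\rr$.

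Applying Theorem \ref{MAINTHMLOCAL} to $(\rr,\bar\pp)$ with data $(\bar A_i),(\bar B_j),(\bar f_j),(v_j)$ yields, when $Z_\AA^\BB$ is nonempty,
\[ \#Z_\AA^\BB \geq \mm\!\left(\#A_1,\ldots,\#A_n;\; \sum_{i=1}^n \#A_i - \sum_{j=1}^r (q^{v_j}-\#B_j)\deg\bar f_j\right). \]
To conclude I would observe that $\deg \bar f_j \leq \deg f_j$ and that $\mm(a_1,\ldots,a_n;N)$ is nondecreasing in $N$ (decreasing $N$ by one either keeps us in the range $N<n$ where $\mm=1$, or else permits decrementing a coordinate of a minimizer), so replacing $\deg \bar f_j$ by $\deg f_j$ only weakens the bound, giving the theorem. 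The sole delicate point in this plan is the structural verification in the first paragraph---identifying $R/\pp^v$ as a finite local principal ring with the correct length, residue cardinality, and principal structure---after which everything is purely formal bookkeeping.
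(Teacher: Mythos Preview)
Your reduction is correct and is verbatim the implication Theorem~\ref{MAINTHMLOCAL} $\Rightarrow$ Theorem~\ref{MAINTHM} that the paper records in Proposition~1.8: pass to $\rr=R/\pp^v$ with $v=\max_j v_j$, push the data through the quotient, and use $\deg\bar f_j\le\deg f_j$ together with the monotonicity of $\mm$ in its last argument.

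That said, this is not the paper's substantive proof of Theorem~\ref{MAINTHM}. In the paper's logical flow, Theorem~\ref{MAINTHMLOCAL} is not established independently; Proposition~1.8 only shows the two statements are equivalent, and the actual work is done in \S2.3, which proves Theorem~\ref{MAINTHM} directly: localize so $R$ is a DVR, form the single polynomial
\[
Q(t)=\prod_{j=1}^r\prod_{y\in\mathcal S(v_j)\setminus\overline{B_j}}(f_j(t)-y),
\]
use the generalized Alon--Friedland--Kalai valuation lemma (Lemma~2.1) to show that over $\overline R=R/\pp^{c+1}$ the nonvanishing set of $\overline Q$ on $\overline{\AA}$ coincides with $Z_\AA^\BB$, and then invoke Alon--F\"uredi over the ring $\overline R$ (Theorem~2.2). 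Theorem~\ref{MAINTHMLOCAL} is afterwards deduced \emph{from} Theorem~\ref{MAINTHM} via Cohen structure theory. So your proposal is a valid reduction, but it defers the content to a statement the paper proves by going the other way; if you want a self-contained argument you would still need to supply the AFK-lemma/Alon--F\"uredi machinery.
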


\begin{remark}
\label{LOCALIZATIONREMARK}
By replacing $R$ with $R_{\pp}$, one immediately reduces the statement of
Theorem \ref{MAINTHM} to the case in which $R$ is a discrete valuation ring.
In this setting, the hypothesis on the $A_i$ and $B_j$ is simply Condition (F).
\end{remark}

\begin{prop}
Theorems \ref{MAINTHMLOCAL} and \ref{MAINTHM} are equivalent.
\end{prop}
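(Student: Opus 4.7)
The plan is to prove both implications by reducing one setting to the other via the quotient map $R \to R/\pp^v$ for suitable $v$. The key point is that a congruence condition modulo $\pp^{v_j}$ with $v_j \leq v$ depends only on the data modulo $\pp^v$, so that one obtains a bijection between the solution sets in $R$ and in $R/\pp^v$.

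For Theorem~\ref{MAINTHMLOCAL} $\Rightarrow$ Theorem~\ref{MAINTHM}, I invoke Remark~\ref{LOCALIZATIONREMARK} to reduce to the case that $R$ is a DVR. I set $v = \max_j v_j$, so that $\rr := R/\pp^v$ is a finite local principal ring of length $v$ with residue field $\F_q$. Writing $\aa_i, \bb_j, \bar f_j$ for the images of $A_i, B_j, f_j$ in $\rr$ (resp.\ $\rr[\tt]$), the hypothesis that distinct elements of $A_i$ and of $B_j$ are incongruent modulo $\pp$ ensures that the reduction maps $A_i \to \aa_i$ and $B_j \to \bb_j$ are bijective and that the incongruence condition descends. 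Since $v_j \leq v$, the congruence $f_j(x) \in B_j \pmod{\pp^{v_j}}$ depends only on the residue of $x$ modulo $\pp^v$, yielding a bijection $Z_{\AA}^{\BB} \leftrightarrow \mathfrak{z}_{\aa,\bb}$. Applying Theorem~\ref{MAINTHMLOCAL}, and then using $\deg \bar f_j \leq \deg f_j$ together with the elementary fact that $\mm(a_1,\dots,a_n;N)$ is nondecreasing in $N$, produces the lower bound asserted by Theorem~\ref{MAINTHM}.

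For the converse, I reverse the construction. Given $(\rr,\pp)$ a finite local principal ring of length $v$ with residue field $\F_q$, I appeal to the structure theory of finite chain rings to realize $\rr \cong R/\pp_R^v$ for a complete DVR $R$ with residue field $\F_q$: in equal characteristic $\rr \cong \F_q[t]/(t^v)$ and we take $R = \F_q[[t]]$, while in mixed characteristic one uses an appropriate totally ramified extension of the Witt vectors $W(\F_q)$. I then choose arbitrary set-theoretic preimages $A_i \subset R$ of $\aa_i$ and $B_j \subset R$ of $\bb_j$, and lift $\bar f_j$ coefficient by coefficient to $f_j \in R[\tt]$, preserving the degree. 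The incongruence-mod-$\pp$ condition persists under lifting, and the bijection from the forward direction now runs in reverse with equalities $\#A_i = \#\aa_i$, $\#B_j = \#\bb_j$, and $\deg f_j = \deg \bar f_j$. Theorem~\ref{MAINTHM} then delivers exactly the bound claimed by Theorem~\ref{MAINTHMLOCAL}.

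The main obstacle is the structure-theoretic input in the converse direction, namely the realization of an arbitrary finite local principal ring as $R/\pp_R^v$ for a DVR $R$ with the prescribed residue field. In equal characteristic this is immediate from the elementary structure of $\F_q[t]/(t^v)$, while in mixed characteristic it rests on the classification of finite chain rings as quotients of totally ramified Eisenstein extensions of Witt vector rings. Everything else is routine bookkeeping around the bijection of solution sets and the monotonicity of $\mm$.
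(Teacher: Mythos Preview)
Your proposal is correct and follows essentially the same approach as the paper: pass to $\rr = R/\pp^v$ (with $v = \max_j v_j$) in the forward direction, and in the converse direction lift $\rr$ to a DVR via a structure theorem and pull back the data. The only cosmetic difference is in the structure-theoretic citation for the converse: the paper invokes the Cohen structure theorems (via Hungerford's result that an Artinian local principal ring is a quotient of a DVR), whereas you phrase the same fact through the classification of finite chain rings and Witt vectors.
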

\begin{proof}
Theorem \ref{MAINTHMLOCAL} $\implies$ Theorem \ref{MAINTHM}: let $\rr = R/\pp^v$ and let $q: R \ra \rr$ be the quotient map.  For $1 \leq i \leq n$,  let $\aa_i = q(A_i)$; for $1 \leq j \le r$, let $\overline{f_j} = q(f_j)$ and $\bb_j = q(B_j)$.  Then $\deg \overline{f_j} \leq \deg f_j$.  The hypothesis that
no two distinct elements of any one of these sets are congruent modulo $\pp$
ensures $\# \aa_i = \# A_i$ and $\# \bb_j = \# B_j$.  Applying Theorem \ref{MAINTHMLOCAL}
to $\rr$,$\aa_1,\ldots,\aa_n,\bb_1,\ldots,\bb_r$,$v_1,\ldots,v_r$,$\overline{f_1},\ldots,\overline{f_r}$
gives
\[ \# Z_{\AA}^{\BB} = \# \mathfrak{z}_{\aa,\bb} \geq \mm \left(\# \aa_1,\ldots,\# \aa_n; \sum_{i=1}^n  \# \aa_i
-\sum_{j=1}^r (q^{v_j} -\# \bb_j) \deg(\overline{f_j}) \right)\]
\[ \geq \mm \left(\# A_1,\ldots,\# A_n; \sum_{i=1}^n  \# A_i
-\sum_{j=1}^r (q^{v_j} -\# B_j) \deg(f_j) \right). \]
Theorem \ref{MAINTHM} $\implies$ Theorem \ref{MAINTHMLOCAL}: the Cohen structure theorems imply that an Artinian local principal ring is a quotient of a Dedekind domain (equivalently, of a DVR)  \cite[Cor. 11]{Hungerford68}.  Thus we may write $\rr = R/\pp^v$ for a Dedekind domain $R$.  We may lift $A_1,\ldots,A_n$,$B_1,\ldots,B_r$,$f_1,\ldots,f_r$ from $\rr$ to $R$ so as to preserve the sizes of the sets and the degrees of the polynomials.  Apply Theorem \ref{MAINTHM}.
\end{proof}

\begin{example}
Let $\rr$ be a finite commutative ring, and let $A \subset \rr$ satisfy Condition (D).
For $1 \leq i \leq r$, let $f_j \in \rr[t_1]$ be a univariate polynomial of degree $d_i \geq 0$, and let $B_1,\ldots,B_r \subset \rr$
be finite and nonempty.  Let
\[ \mathfrak{z}_A^{\BB} = \{ x \in A \mid f_1(x) \in B_1,\ldots,f_r(x) \in B_r \}. \]
Suppose first that $d_j \geq 1$ for all $j$.  Then for each $y \in \rr$, the polynomial
$f_j-y$ also has degree $d_j$, and because $A$ satisfies Condition (D), there are at most $\deg(f_j)$ elements of $A$ such that $f_j(x) = y$.  So there are at most $(\# \rr - \# B_j)(\deg f_j)$ elements $x \in A$ such that $f_j(x) \notin B_j$ and
thus
\[ \# \mathfrak{z}_A^{\BB} \geq \# A - \sum_{j=1}^r (\# \rr - \# B_j)(\deg f_j). \]
Now suppose that some $f_j$ is constant.  Then: if the constant value lies in $B_j$
then $f_j(A) \subset B_j$, whereas if the constant value does not lie in $B_j$ then
$\mathfrak{z}_A^{\BB} = \varnothing$.  \\ \indent
This establishes a stronger result than Theorem \ref{MAINTHMLOCAL} when $n = 1$.  In particular: the finite ring $\rr$ need not be local and principal, the target sets
$B_1,\ldots,B_r$ need not satisfy Condition (F) but rather may be arbitrary nonempty
subsets, and we do not need to separately allow $\mathfrak{z}_{\AA}^B = \varnothing$ if each polynomial
has positive degree.
\end{example}

\subsection{Comparison With Theorem \ref{RVW2T}}
\textbf{} \\ \\ \noindent
Theorem \ref{RVW2T} is the special case of Theorem \ref{MAINTHM}
obtained by taking $R = \Z_K$ and $B_j = \{0\}$ for all $j$.  Thus on the face of
it Theorem \ref{MAINTHM} is a twofold generalization of Theorem \ref{RVW2T}: in place
of $\Z_K$ we may take any pair $(R,\pp)$ with $R$ a Dedekind domain and $\pp$ a prime
ideal such that $R/\pp$ is a finite field; and in place of polynomial congruences
we are studying polynomial systems with \textbf{restricted output sets} $B_j$.
\\ \\
The first generalization turns out not to be an essential one.  Theorem \ref{MAINTHMLOCAL} shows that the result can be phrased in terms of finite, local principal rings.  But every finite local principal ring is isomorphic to $\Z_K/\pp^v$ for some prime ideal $\pp$ in the ring of
integers of a number field $K$.  This is due to A.A. Ne\v caev \cite{Necaev71}.  A more streamlined proof appears in \cite{Brunyate-Clark15}.

\begin{example}
Consider $\rr = \F_p[t]/(t^2)$: it is a finite, local principal ring
with residue cardinality $p$ and length $2$.  Further, it is a commutative $\F_p$-algebra of dimension $2$ which is not reduced: i.e., it has nonzero nilpotent elements, and this latter description characterizes $\rr$ up to isomorphism.  So
let $K = \Q(\sqrt{p})$ and let $\pp$ be the unique prime ideal of $\Z_K$ dividing $p$.
The ring $\Z_K/p \Z_K = \Z_K/\pp^2$ is also a commutative $\F_p$-algebra of dimension $2$ which is not reduced, so $\rr = \F_p[t]/(t^2) \cong \Z_K/\pp^2$.
\end{example}
\noindent
Nevertheless it is natural to think in terms of Dedekind
domains, and switching from one Dedekind domain to another seems artificial.  The proof of Theorem \ref{RVW2T} uses the fact that $\Z_K$ has characteristic zero in an essential way: a key technical tool was the use of \textbf{Schanuel-Brink operators} to replace a congruence modulo $\pp^v$ in $\Z_K$
with a system of congruences modulo $\pp$.  As Schanuel pointed out, this construction is morally about \textbf{Witt vectors} and thus particular to  unequal characteristic.  Our proof of Theorem \ref{MAINTHM} does not reduce to the number field case but works directly in any Dedekind domain.  Applied to $R = \Z_K$ with $B_j = \{0\}$ for all $j$, it gives a new proof of Theorem \ref{RVW2T}.  This new approach feels more transparent
and more fundamental, and we hope that it will be more amenable to further generalization.


\subsection{Applications of the Main Theorem}
\textbf{} \\ \\ \noindent
The generalization from polynomial congruences to polynomial congruences
with restricted outputs allows a wide range of applications.  As we mentioned in
\cite{Clark-Forrow-Schmitt14}, whenever one has a combinatorial existence theorem proved via the Schauz-Wilson-Brink Theorem (or an argument that can be viewed as a special case theroef) one can apply instead Theorem \ref{RVW2T} to get a lower bound on the number of solutions.  Moreover, most applications of
the Schauz-Wilson-Brink Theorem include a ``homogeneity'' condition which ensures the existence of a trivial solution.  Theorem \ref{RVW2T} applies also in the ``inhomogeneous case''.
\\ \indent
All of these applications can be generalized by allowing restricted outputs.  In  \cite{Clark-Forrow-Schmitt14} we gave three combinatorial applications of Theorem \ref{RVW2T}: to hypergraphs, to generalizations of the Erd\H os-Ginzburg-Ziv Theorem, and to weighted Davenport constants.  In the former two cases, we can (and shall) immediately apply the Main Theorem to get stronger results.  We include the proof of the hypergraph theorem to showcase the use of nonlinear polynomials.  We omit the proof
of the EGZ-type theorem: the proof given in \cite{Clark-Forrow-Schmitt14} of the special case adapts immediately.
\\ \\
Our Main Theorem leads to a generalization of the weighted Davenport constant that we call the \textbf{fat Davenport constant}.  This seems to be an interesting object of study in its own right
and we include some general discussion.  The fat Davenport constant can also be used to extend results of Alon-Friedland-Kalai on \emph{divisible subgraphs}.  This is a privileged application: the restricted output aspect
of the Main Theorem was directly inspired by \cite{Alon-Friedland-Kalai84}.
\\ \\
One reason that the combinatorial applications are interesting is that the upper bounds they give are -- in the unweighted, zero-output case --
accompanied by lower bounds coming from elementary combinatorial constructions,
which has the effect of showing sharpness in Schanuel's Theorem in certain cases.  It is an interesting challenge, not met here, to find other types of restricted input sets $A_i$ and restricted output sets $B_j$ illustrating sharpness in our generalized theorems.
\\ \\
Finally, we give an application of the Main Theorem to polynomial interpolation with fat targets.  As a special case we will deduce a generalization
of a Theorem of Troi-Zannier \cite{Troi-Zannier97} which was proved by them via more combinatorial means.  

\subsection{Acknowledgments}
\textbf{} \\ \\ \noindent
Thanks to Dino Lorenzini, Paul Pollack and Lori D. Watson for helpful discussions.  Thanks to Bob Rumely for suggesting the terminology of ``fat targets''.  I am deeply indebted to John R. Schmitt for introducing me to this rich circle of ideas, for many helpful remarks, and for Example \ref{SCHMITTEX}.

\section{Proof of the Main Theorem}


\subsection{A Generalized Alon-Friedman-Kalai Lemma}
\textbf{} \\ \\ \noindent
The following result is a generalization of \cite[Lemma A.3]{Alon-Friedland-Kalai84}.

\begin{lemma}
\label{GENERALSAFKLEMMA}
Let $R$ be a discrete valuation ring, with maximal
ideal $\pp = (\pi)$ and finite residue field $R/\pp \cong \F_q$.  Let
$v \in \Z^+$, and let $\mathcal{S}(v)$ be a set of coset
representatives for $\pp^v$ in $R$.  Let $T \subset R$ satisfy Condition (F):
no two distinct elements of $T$ are congruent modulo $\pp$, and let
$\overline{T}$ be the image of $T$ in $R/\pp^v$.   Let $x \in R$.  Put
\[ \mathbf{P}(x,v,T) = \prod_{y \in \mathcal{S}(v) \setminus \overline{T}}
(x-y) \]
and
\[ c(v) = \sum_{i=1}^{v-1} \left(q^i-1 \right). \]
Then we have:
\begin{equation}
\label{SAFKLEMMAEQ1}
\ord_{\pp} \mathbf{P}(x,v,T) \geq c(v),
\end{equation}
\begin{equation}
\label{SAFKLEMMAEQ2}
\ord_{\pp} \mathbf{P}(x,v,T) = c(v)
\iff \text{ there is } y \in \overline{T} \text{ such that } \ord_{\pp}(x-y) \geq v.
\end{equation}
\end{lemma}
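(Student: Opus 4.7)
\medskip

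\noindent\textbf{Proof proposal.} The plan is to reduce the computation of $\ord_{\pp}\mathbf{P}(x,v,T)$ to a counting problem in the finite set $\mathcal{S}(v)$, and then isolate the contribution of the $|T|$ omitted factors.

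First I would introduce the coset representative $y_0 \in \mathcal{S}(v)$ of $x$ modulo $\pp^v$, so that $\alpha := \ord_{\pp}(x-y_0) \geq v$. For any other $y \in \mathcal{S}(v)\setminus\{y_0\}$ one has $\ord_{\pp}(y-y_0) < v \leq \alpha$, and writing $x-y = (x-y_0) - (y-y_0)$ gives the key identity
\[ \ord_{\pp}(x-y) \;=\; \ord_{\pp}(y-y_0) \qquad (y \in \mathcal{S}(v),\ y\neq y_0). \]
This replaces $x$ by the concrete point $y_0$, and the computation now takes place entirely inside $\mathcal{S}(v)$.

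Next I would evaluate the ``full'' sum
\[ \Sigma \;:=\; \sum_{y\in\mathcal{S}(v),\, y\neq y_0}\ord_{\pp}(y-y_0) \]
by the layer-cake/tail formula: $\#\{y\in\mathcal{S}(v) : y\equiv y_0\pmod{\pp^j}\}=q^{v-j}$ for $0\le j\le v$, so
\[ \Sigma \;=\; \sum_{j=1}^{v-1}\bigl(q^{v-j}-1\bigr) \;=\; \sum_{i=1}^{v-1}(q^i-1) \;=\; c(v). \]
So $c(v)$ is exactly the order of the product $\prod_{y\in\mathcal{S}(v),\,y\neq y_0}(x-y)$.

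Now I would case-split on whether $y_0\in\overline{T}$. If yes (Case 1), then there is a unique $t_0\in T$ with $t_0\equiv x\pmod{\pp^v}$, and this $t_0$ is the element witnessing the right-hand side of \eqref{SAFKLEMMAEQ2}. The factor $(x-y_0)$ is now among the \emph{removed} ones, so
\[ \ord_{\pp}\mathbf{P}(x,v,T) \;=\; \Sigma \;-\; \sum_{t\in T,\, t\neq t_0}\ord_{\pp}(y_t-y_0). \]
Condition (F) forces the $y_t$ for $t\neq t_0$ to be inequivalent to $y_0$ modulo $\pp$, so every term in the subtracted sum vanishes, giving equality $\ord_{\pp}\mathbf{P}(x,v,T)=c(v)$. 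If instead $y_0\notin\overline{T}$ (Case 2), then the factor $(x-y_0)$ is kept and
\[ \ord_{\pp}\mathbf{P}(x,v,T) \;=\; \alpha \;+\; \Sigma \;-\; \sum_{t\in T}\ord_{\pp}(y_t-y_0). \]
By Condition (F) at most one $t\in T$ has $y_t\equiv y_0\pmod{\pp}$, and for that $t$ the order is $< v$ (since $y_t\neq y_0$ are distinct coset reps modulo $\pp^v$). Hence the subtracted sum is $\leq v-1 < v \leq \alpha$, and we get the \emph{strict} inequality $\ord_{\pp}\mathbf{P}(x,v,T)>c(v)$. This establishes \eqref{SAFKLEMMAEQ1} and both directions of \eqref{SAFKLEMMAEQ2} simultaneously.

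The only mildly delicate step is the bookkeeping in Case 2, where one must use Condition (F) twice: once to ensure that at most one removed factor lies above $y_0$ modulo $\pp$, and once to bound the loss it inflicts by $v-1$, which is then absorbed by the ``bonus'' $\alpha\ge v$ coming from the retained factor $(x-y_0)$. Everything else is bookkeeping with geometric sums.
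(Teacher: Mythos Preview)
Your proof is correct and follows essentially the same strategy as the paper's: compute the baseline valuation $c(v)$ from the full product over representatives of nonzero cosets, then use Condition~(F) to show that the factors removed for $\overline{T}$ have zero valuation except possibly one, and track that remaining factor against the ``bonus'' factor $(x-y_0)$. The only difference is organizational: the paper first treats $\#\overline{T}=1$ (its Steps~1--2, splitting on whether $\ord_{\pp}(x-y_0)\ge v$) and then reduces $\#\overline{T}>1$ to the singleton case, whereas you fix $y_0$ as the coset representative of $x$ from the outset and handle all $T$ simultaneously by splitting on whether $y_0\in\overline{T}$. Your arrangement is arguably a bit cleaner, but the mathematical content is the same.
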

\begin{proof}
Put $\mathbf{P}_0 = \prod_{y \in \mathcal{S}(v) \setminus \{ \pp^v\}} y$.  \\
Step 1: Suppose $\overline{T} = \{y_0\}$ and $\ord_{\pp}(x-y_0) \geq v$.  As $y$ runs through $\mathcal{S}(v) \setminus \overline{T}$, $x-y$ runs through a set of representatives of the nonzero cosets of $\pp^v$ in $R$, and since if $x \equiv y \not \equiv 0 \pmod{\pp^v}$ then $\ord_{\pp} x = \ord_{\pp} y$, we have
\[ \ord_{\pp} \mathbf{P}(x,v,T) = \ord_{\pp} \mathbf{P}_0 \] \[ = \sum_{i=0}^{v-1} i \cdot (\# \{x \in (\pp^i \cap \mathcal{S}(v)) \setminus (\pp^{i+1} \cap
\mathcal{S}(v)) \}) =
\sum_{i=0}^{v-1} i \cdot (q^{v-i}-q^{v-i-1}) \]
\[ = (q^{v-1}-q^{v-2}) + 2(q^{v-2}-q^{v-3}) + 3(q^{v-3}-q^{v-4}) + \ldots +
(v-1)(q-1)  \]
\[ = (q^{v-1} + q^{v-2} + \ldots + 1) - (v-1) = \sum_{i=1}^{v-1}(q^i-1) = c(v). \]
Step 2: Suppose $\overline{T} = \{y_0\}$ and $\ord_{\pp}(x-y_0) < v$.
 Then there is a unique $y_1 \in \mathcal{S}(v)$ with $x \equiv y_1 \pmod{\pp^v}$, and $y_1 \neq y_0$.  Then we have $\mathbf{P}(x,v,T) = \mathbf{P}_0 \left( \frac{x-y_1}{x-y_0} \right)$, so
\[ \ord_{\pp} \mathbf{P}(x,v,T) = c(v) + \ord_{\pp}(x-y_1) - \ord_{\pp}(x-y_0) > c(v). \]
Step 3: Suppose $\# \overline{T} > 1$.  Then $\mathbf{P}(x,v,T)$
is obtained from omitting factors from a product considered in Step 1 or Step 2.  Because no two elements of $T$ are congruent modulo $\pp$, the number of $y \in \overline{T}$
such that $\ord_{\pp}(x-y) \geq 1$ is either $0$ or $1$, and thus $\mathbf{P}(x,v,T)$ can be obtained from the product in Step 1 or Step 2 by omitting only factors of zero $\pp$-adic
valuation.  So $\ord_{\pp} \mathbf{P}(x,v,T) \geq c(v)$, and strict inequality holds
precisely when there is some $y \in \mathcal{S}(v) \setminus \overline{T}$ with
$\ord_{\pp}(x-y) \geq v$.
\end{proof}

\subsection{Alon-F\"uredi Over a Ring}
\textbf{} \\ \\ \noindent
The aim of this section is to prove the following result.

\begin{thm}(Alon-F\"uredi Over a Ring)
\label{STRONGALONFUREDI}
Let $R$ be a ring, let $A_1,\ldots,A_n \subset R$ satisfying Condition (D).  Put $\AA = \prod_{i=1}^n A_i$ and $a_i = \# A_i$ for all $1 \leq i \leq n$.  Let $P \in R[t] = R[t_1,\ldots,t_n]$.  Let
\[\mathcal{U}_{\AA} = \{x \in A \mid P(x) \neq 0\}, \ \mathfrak{u}_A = \# \mathcal{U}_{\AA}. \]
Then either $\uu_{\AA} = 0$ or $\uu_{\AA} \geq \mathfrak{m}(a_1,\ldots,a_n;a_1+\ldots+a_n - \deg P)$.
\end{thm}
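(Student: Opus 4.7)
The plan is to adapt the classical Alon--F\"uredi argument (which works over a field) to the ring setting, with Condition (D) playing the role of the field property. Three polynomial-algebraic facts underlie the argument and each remains valid under Condition (D): (i) $\prod_{a \in A_i}(t_i - a)$ is monic in $t_i$, so polynomial division in $R[\tt]$ is available; (ii) a polynomial $Q \in R[t]$ vanishing at $k$ distinct elements of a set satisfying Condition (D) has degree at least $k$, by iteratively factoring $Q(t) = (t-a)Q'(t)$ together with the fact that differences of distinct elements are non-zero-divisors; (iii) a polynomial $Q \in R[t_1,\ldots,t_m]$ with $\deg_{t_i} Q < a_i$ for all $i$ that vanishes on $\prod_i A_i$ must be the zero polynomial, by induction on $m$ using (ii).

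First I would reduce $P$ modulo $\prod_{a \in A_i}(t_i-a)$ in each variable $t_i$ to obtain a polynomial $P'$ with $\deg_{t_i} P' < a_i$ whose values on $\AA$ coincide with those of $P$ and whose total degree does not exceed $\deg P$. Since $\mm$ is nondecreasing in its last argument, proving the theorem for $P'$ suffices, so henceforth I assume $\deg_{t_i} P < a_i$ for all $i$. Then I would induct on $n$. The base case $n=1$ is immediate from (ii): $P$ has at most $\deg P$ roots in $A_1$, hence at least $a_1 - \deg P = \mm(a_1; a_1 - \deg P)$ non-roots. For the inductive step, write $P = \sum_{k=0}^{d_n} P_k(t_1,\ldots,t_{n-1})\,t_n^k$ with $d_n = \deg_{t_n} P \leq a_n - 1$. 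The leading coefficient $P_{d_n}$ is a nonzero polynomial with bounded partial degrees, so by (iii) it does not vanish identically on $\prod_{i<n} A_i$. For each $(y_1,\ldots,y_{n-1})$ in that product at which $P_{d_n}$ is nonzero, the specialization $Q(t_n) := P(y_1,\ldots,y_{n-1},t_n) \in R[t_n]$ has leading coefficient $P_{d_n}(y_1,\ldots,y_{n-1}) \neq 0$, hence exact degree $d_n$, so by (ii) it admits at least $a_n - d_n$ non-roots in $A_n$.

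Applying the inductive hypothesis to $P_{d_n}$ (of total degree at most $\deg P - d_n$) and using monotonicity of $\mm$ then yields
\[ \uu_{\AA} \;\geq\; (a_n - d_n)\cdot\mm\bigl(a_1,\ldots,a_{n-1};\, \textstyle\sum_{i<n} a_i - \deg P + d_n\bigr). \]
The proof concludes with the combinatorial observation that the right-hand side is at least $\mm(a_1,\ldots,a_n; \sum_i a_i - \deg P)$: fixing the $n$th coordinate to $y_n = a_n - d_n \in [1,a_n]$ and appending it to an $\mm$-minimizer for the first $n-1$ coordinates produces a feasible vector whose product realizes the right-hand side. I expect the main delicate point to be careful bookkeeping for the edge cases in which an intermediate quantity falls outside the ``interesting range'' $N \geq n$ of $\mm$; in each such case the required inequality should collapse to the trivial bound $\uu_{\AA} \geq 1$, which holds by the hypothesis $\uu_{\AA} \neq 0$.
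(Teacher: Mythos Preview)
Your argument is correct. It is precisely the adaptation of the original Alon--F\"uredi proof to the ring setting that the author explicitly acknowledges works (``With Theorem~\ref{CATSLEMMA} in hand, Theorem~\ref{STRONGALONFUREDI} can be established following the original argument of \cite{Alon-Furedi93}'') but then deliberately sets aside in favor of a different route.

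The paper instead adapts the Ball--Serra approach: Step~1 proves a ring-level Punctured Combinatorial Nullstellensatz (if $f$ vanishes on $\AA\setminus\YY$ then $f=\sum g_i\varphi_i + u\prod_i\frac{\varphi_i}{\psi_i}$ with a degree bound on $u$), and Step~2 inducts on $n$ by taking $Y_i=A_i$ for $i<n$ and $Y_n=\{y\in A_n: f(\,\cdot\,,y)\not\equiv 0\}$, then applying the induction hypothesis to $w(t_1,\ldots,t_{n-1})=u(t_1,\ldots,t_{n-1},y)$ for each $y\in Y_n$. Thus the paper slices along the last coordinate and controls the degree of the slice via the auxiliary polynomial $u$, whereas you slice by extracting the leading $t_n$-coefficient $P_{d_n}$ and apply induction to it directly. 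Your route is shorter and avoids proving the punctured Nullstellensatz as a separate lemma; the paper's route isolates that lemma as an independent structural statement, which the author prefers for conceptual transparency and because it may be more amenable to further generalization (a concern the author makes explicit). Both collapse to the same combinatorial inequality $(a_n-d)\cdot\mm(a_1,\ldots,a_{n-1};N')\ge\mm(a_1,\ldots,a_n;N)$ at the end.
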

\noindent
When $R$ is a field, this is the Alon-F\"uredi Theorem \cite[Thm. 4]{Alon-Furedi93}.   The key observation that the Combinatorial Nullstellensatz works over an arbitrary ring provided we impose Condition (D) is due to U. Schauz.  It was further developed in \cite[$\S 3$]{Clark14}.  The relevance of Condition (D) is shown in the following result.

\begin{thm}(\textbf{CATS Lemma}  \cite[Thm. 12]{Clark14})
\label{CATSLEMMA}
\label{CATS}
Let $R$ be a ring.  For $1 \leq i \leq n$, let $A_i \subset R$ be nonempty and finite.  Put $\AA = \prod_{i=1}^r A_i$.  For $1 \leq i \leq n$, let
$\varphi_i = \prod_{a_i \in A_i} (t_i-a_i)$. \\
a) (Schauz \cite{Schauz08a}) The following are equivalent: \\
(i) $X$ satisfies condition (D). \\
(ii) For all $f \in R[t_1,\ldots,t_n]$, if  $\deg_{t_i} f < \# A_i$ for all $1 \leq i\leq n$ and
$f(a) = 0$ for all $a \in \prod_{i=1}^n A_i$, then $f = 0$. \\
(iii) If $f|_{\AA} \equiv 0$, there are $g_1,\ldots,g_n \in R[t_1,\ldots,t_n]$ such that $f = \sum_{i=1}^n g_i \varphi_i$. \\
b) (Chevalley-Alon-Tarsi) The above conditions hold when $R$ is a domain.
\end{thm}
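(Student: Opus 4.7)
The plan is to establish part (a) via the implication cycle (i) $\Rightarrow$ (ii) $\Rightarrow$ (iii) $\Rightarrow$ (i), and then to deduce part (b) as the observation that in a domain, no difference of distinct elements is a zero-divisor, so Condition (D) is automatic for any finite nonempty subset. Throughout I read statement (i) as saying that each $A_i$ satisfies Condition (D).

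For (i) $\Rightarrow$ (ii), I would induct on $n$. The base case $n=1$ is a Vandermonde argument over a ring: if $f(t) = c_0 + c_1 t + \cdots + c_{k-1} t^{k-1}$ with $k = \# A_1$ vanishes on $A_1 = \{a_1,\ldots,a_k\}$, then the coefficient vector lies in the kernel of the Vandermonde matrix $V = (a_i^{j-1})$. Multiplying by the adjugate gives $(\det V) c_j = 0$ for each $j$; since $\det V = \prod_{i<j}(a_j - a_i)$ is a product of non-zero-divisors, hence itself a non-zero-divisor, we obtain $c_j = 0$. For the inductive step, write $f = \sum_k h_k(t_1,\ldots,t_{n-1}) t_n^k$. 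Specializing $(t_1,\ldots,t_{n-1})$ to each point of $\prod_{i<n} A_i$ yields a univariate polynomial in $t_n$ of degree less than $\# A_n$ vanishing on $A_n$, so by the base case every $h_k$ vanishes on $\prod_{i<n} A_i$; the partial-degree hypothesis is inherited, and the inductive hypothesis forces each $h_k = 0$.

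For (ii) $\Rightarrow$ (iii), I apply the division algorithm successively in each variable. Since $\varphi_i$ is monic in $t_i$, one may write $f = g_i \varphi_i + r_i$ with $\deg_{t_i} r_i < \# A_i$, and dividing in $t_j$ leaves $\deg_{t_i}$ unchanged for $i \neq j$ because $\varphi_j$ involves only $t_j$. Iterating over $i = 1,\ldots,n$ produces $f = \sum_{i=1}^n g_i \varphi_i + r$ with $\deg_{t_i} r < \# A_i$ for every $i$; and $r|_{\AA} = 0$ since each $\varphi_i|_{\AA} = 0$. Applying (ii) to $r$ gives $r = 0$, which is (iii). For (iii) $\Rightarrow$ (i), I argue by contrapositive. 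Suppose some $A_i$ fails Condition (D); relabel so $i=1$ and pick distinct $a, a' \in A_1$ and $c \in R \setminus \{0\}$ with $c(a-a') = 0$. Set $f(t_1) = c \prod_{b \in A_1 \setminus \{a'\}}(t_1 - b) \in R[t_1] \subset R[t_1,\ldots,t_n]$. Then $f(b) = 0$ for $b \in A_1 \setminus \{a'\}$ and $f(a') = c(a'-a)\prod_{b \neq a,a'}(a'-b) = 0$; since $f$ depends only on $t_1$, it vanishes on all of $\AA$. If (iii) held, $f = \sum_i g_i \varphi_i$; specializing $t_i$ to any fixed $a_i \in A_i$ for $i \geq 2$ (available because each $A_i$ is nonempty) annihilates the $i \geq 2$ summands and leaves $f(t_1) = h(t_1)\varphi_1(t_1)$ in $R[t_1]$ for some $h$. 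But $\varphi_1$ is monic of degree $\# A_1 > \#A_1 - 1 = \deg f$, which forces $h = 0$ and hence $f = 0$, contradicting that the leading coefficient of $f$ is $c \neq 0$.

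The only mildly delicate points are the Vandermonde step (which depends on the elementary but essential fact that a product of non-zero-divisors is a non-zero-divisor) and bookkeeping to confirm that the successive univariate divisions in (ii) $\Rightarrow$ (iii) genuinely preserve the partial-degree bounds established at earlier stages; neither presents a real obstacle. Part (b) requires nothing new: when $R$ is a domain, (i) is automatic, so (ii) and (iii) follow from part (a).
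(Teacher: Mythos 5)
The paper does not prove this statement; it is cited directly from \cite[Thm.~12]{Clark14}, so there is no in-paper argument to compare against. That said, your proof is correct, and the structure (a univariate base case for (i)~$\Rightarrow$~(ii), induction on the number of variables, monic division in each $t_i$ for (ii)~$\Rightarrow$~(iii), and an explicit one-variable counterexample for the contrapositive of (iii)~$\Rightarrow$~(i)) is the standard one. Two small remarks. First, for the univariate base case you invoke the Vandermonde adjugate; a common alternative---closer to what Schauz and Clark use---is to iterate the root-factor theorem: $f(a_1)=0$ gives $f=(t-a_1)f_1$ over any commutative ring, and $0=f(a_2)=(a_2-a_1)f_1(a_2)$ with $a_2-a_1$ a non-zero-divisor forces $f_1(a_2)=0$, and so on, which avoids expanding the Vandermonde determinant; both routes are fine and buy essentially the same thing. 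Second, in (ii)~$\Rightarrow$~(iii), division of $f$ by $\varphi_j$ leaves $\deg_{t_i}$ for $i\neq j$ \emph{not increased} rather than ``unchanged''---that weaker statement is what you actually need, so the bookkeeping still closes.
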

\noindent
With Theorem \ref{CATSLEMMA} in hand, Theorem \ref{STRONGALONFUREDI} can be established following the original argument of \cite{Alon-Furedi93}.  However, I find this argument a bit mysterious.
Theorem \ref{STRONGALONFUREDI} is the backbone of this work and a key barrier to further generalizations of Theorem
\ref{MAINTHM}.  Because of this I feel the need to give the most conceptually transparent argument possible.  For this we adapt a proof of Alon-F\"uredi due to Ball and Serra.

\begin{proof}
Step 1: We establish a variant of the Punctured Combinatorial Nullstellensatz
of Ball-Serra \cite[Thm. 4.1]{Ball-Serra09}.\footnote{The result established here
is obtained from the Punctured Combinatorial Nullstellensatz by (i) working
over an arbitrary ring under Condition (D) and (ii) neglecting multiplicities.} Let $R$ be a ring, let $A_1,\ldots,A_n \subset R$ satisfying
Condition (D), and put $\AA = \prod_{i=1}^n A_i$, $\YY = \prod_{i=1}^n Y_i$.  For $1 \leq i \leq n$ let
$\varnothing \neq Y_i \subset A_i$.  For $1 \leq i \leq n$, put
\[ \varphi_i(t) = \prod_{a_i \in A_i} (t_i-a_i), \ \psi_i(t) = \prod_{y_i \in Y_i}
(t_i-y_i). \]
Let $f \in R[\tt] = R[t_1,\ldots,t_n]$.  Suppose that for all $x \in \AA \setminus \YY$, $f(x) = 0$.
Then {\sc we claim} there are $g_1,\ldots,g_n,u \in R[\tt]$ such that
\[ f = \sum_{i=1}^n g_i \varphi_i + u \prod_{i=1}^n \frac{\varphi_i}{\psi_i}, \
\deg u \leq \deg f - \sum_{i=1}^n \left(\# A_i - \# Y_i \right).\]
{\sc proof of claim: } We perform polynomial division on $f$ by the
monic polynomial $\varphi_1$, then divide the remainder by the monic
polynomial $\varphi_2$, and so forth, finally dividing by $\varphi_n$
to get $f = \sum_{i=1}^n g_i \varphi_i + r$.  By \cite[$\S 3.1$]{Clark14}, we have $\deg r \leq \deg f$ and $\deg_{t_i} r < \deg \varphi_i$ for all $i$.
Dividing $r\psi_1$ by $\varphi_1$ we get
\[ r \psi_1 = r_1 \varphi_1 + s_1. \]
Then
\[ \deg_{t_1} s_1 < \deg \varphi_1 \]
whereas for all $i \neq 1$,
\[ \deg_{t_i} s_1 \leq \deg_{t_i} r \psi_1 = \deg_{t_i} r < \deg \varphi_i. \]
Since $s_1$ vanishes identically on $\AA$ and $\AA$ satisfies
Condition (D), Theorem \ref{CATSLEMMA} applies to show $s_1 = 0$: that is,
we may write $r = \frac{\varphi_1}{\psi_1} r_1$.  Continuing this process
with respect to $t_2,\ldots,t_n$, we get $r = \prod_{i=1}^n \frac{\varphi_i}{\psi_i} u$
with
\[ \deg u \leq \deg r - \sum_{i=1}^n (\deg(\varphi_i) - \deg(\psi_i)) \leq  \deg f - \sum_{i=1}^n \left(\# A_i - \# Y_i \right).  \]
Step 2: Put $A = \prod_{i=1}^n A_i$, and let $f \in R[t_1,\ldots,t_n]$.  We may
assume that $f$ does not vanish identically on $\AA$. We go by induction on $n$, the case $n = 1$ following from Theorem \ref{CATSLEMMA}.  Suppose $n \geq 2$ and the
result holds for $n -1$.  Define
\[ Y_i = \begin{cases} A_i, & 1 \leq i \leq n-1 \\
\{y \in X_n \mid f(t_1,\ldots,t_{n-1},y) \neq 0 \} & i = n \end{cases}. \]
By our hypothesis on $f$, $Y_n \neq \varnothing$.  Let
$y \in Y_n$.  We apply Step 1 to $f$, getting
\[ f = \sum_{i=1}^n g_i \varphi_i + u \frac{\varphi_n}{\psi_n} \]
and put $w(t_1,\ldots,t_{n-1}) = u(t_1,\ldots,t_{n-1},y)$.  Then \[\deg w \leq \deg u \leq \deg f - \# A_n + \# Y_n, \] and for all $x' = (x_1,\ldots,x_{n-1}) \in \prod_{i=1}^{n-1} A_i$, we have $f(x',y) = 0 \iff w(x') = 0$.  By induction
there are $a_1,\ldots,a_{n-1} \in \Z^+$ with $1 \leq a_i \leq \# A_i$ for all $i$
and \[\sum_{i=1}^{n-1} a_i = \left(\sum_{i=1}^{n-1} \# A_i\right) - \deg w \geq \left(\sum_{i=1}^{n-1} \# A_i\right) - \deg u\] such
that $w$ is nonvanishing at at least $\prod_{i=1}^{n-1} a_i$ points of $\prod_{i=1}^{n-1} A_i$.  The $a_1,\ldots,a_{n-1}$ depend on $y$,
but if we choose $a_1,\ldots,a_{n-1}$ so as to minimize $\prod_{i=1}^{n-1} a_i$, then we find $(\prod_{i=1}^{n-1} a_i)(\# Y_n)$
points of $X$ at which $f$ is nonvanishing, hence
\[ \mathcal{U}_{\AA} \geq \mm(\# A_1,\ldots,\# A_n; \sum_{i=1}^{n-1} a_i + \# Y_n). \]
Since
\[ \sum_{i=1}^{n-1} a_i + \# Y_n \geq
 (\sum_{i=1}^{n-1} \# A_i) - \deg u + (\deg u + \# A_n-\deg f) =
\sum_{i=1}^n \# A_i - \deg f, \]
we have $\mathcal{U}_{\AA} \geq
\mm(\# A_1,\ldots,\# A_n; \sum_{i=1}^n \# A_i - \deg f)$.
\end{proof}

\begin{remark}
\label{AFSHARP}
Theorem \ref{STRONGALONFUREDI} is \emph{sharp} in the following sense: let
$R$ be a ring, $A_1,\ldots,A_n \subset R$ satisfying Condition (D), and put
$\AA = \prod_{i=1}^n A_i$.  Let $d \in \Z^+$.  There is a degree $d$ polynomial $f \in R[t_1,\ldots,t_n]$ which is nonzero at precisely $\mm(\# A_1,\ldots,\# A_n; \sum_{i=1}^n \# A_i - d)$ points of $\AA$.  In fact something stronger holds: let $y = (y_1,\ldots,y_n) \in \Z^n$ with $1 \leq y_i \leq \# A_i$ for all $i$.  For $1 \leq i \leq n$, choose $Y_i \subset A_i$ with $\# Y_i = y_i$, and put $f = \prod_{i=1}^n \prod_{x \in Y_i} (t_i-x)$.  Then $\deg f = \sum_{i=1}^n (\# A_i - \# Y_i)$ and $f$ is nonvanishing precisely on
$\prod_{i=1}^n (A_i \setminus Y_i)$, a subset of size $\prod_{i=1}^n y_i$.
\end{remark}

\begin{remark}
Both of the main results of \cite{Ball-Serra09} -- namely Theorems 3.1 and 4.1 -- can be generalized by replacing the arbitrary field $\mathbb{F}$ by an arbitrary
ring $R$ under the assumption that the sets satisfy Condition (D).  In the former case the argument adapts immediately; in the latter case it requires some mild modifications.
\end{remark}

\subsection{Proof of the Main Theorem}

\begin{proof}
We will prove Theorem \ref{MAINTHM}.  As in Remark \ref{LOCALIZATIONREMARK}, we may assume $R$ is a DVR, and thus our assumption on $A_1,\ldots,A_n,B_1,\ldots,B_r$ becomes Condition (F).  Let $\AA = \prod_{i=1}^n A_i$.  For $1 \leq j \leq r$, let $\overline{B_j}$ be the image of $B_j$ in $R/\pp^{v_j}$.  For $a \in \Z^+$, let $\mathcal{S}(a)$ be a set of coset representatives for $\pp^a$ in $R$.  Put
\[ Q(t) = \prod_{j=1}^r \prod_{y \in \mathcal{S}(v_j) \setminus \overline{B_j}} (P_j(t) - y) \in R[t]. \]
For $1 \leq j \leq s$ put
\[ c_j = \sum_{i=1}^{v_j-1} (q^i-1), \]
and put
\[  c = \sum_{j=1}^r c_j. \]
Let $\overline{R} = R/\pp^{c+1}$.  Let $\overline{Q}$ be the image of $Q$ in $\overline{R}$ and $\overline{\AA}$ the image of $\AA$ in $\overline{R}^n$.  Then \[\deg \overline{Q} \leq \deg Q = \sum_{j=1}^r (q^{v_j}-b_j)\deg f_j. \]
Because of Condition (F), the natural map $\AA \mapsto \overline{\AA}$ is a bijection.  Let
\[\mathcal{U} = \{ \overline{x} \in \overline{\AA} \mid
\overline{Q}(\overline{x}) \neq 0\}. \]
Let $\overline{x} \in \overline{\AA}$.  Using Lemma (\ref{GENERALSAFKLEMMA}), we get
\[ \overline{x} \in \mathcal{U}  \iff \overline{Q}(\overline{x}) \neq 0 \] \[ \iff \ord_{\pp}( Q(x)) \leq c
 \stackrel{(\ref{SAFKLEMMAEQ1})}{\iff}  \forall 1 \leq j \leq r, \ \ord_{\pp} \prod_{y \in \mathcal{S}(v) \setminus \overline{B_j}} (f_j(x)-y) \leq c_j \]
\[ \stackrel{(\ref{SAFKLEMMAEQ2})}{\iff} \forall 1 \leq j \leq r, \exists b_j \in \overline{B_j} \text{ such that } \ord_{\pp}(f_j(x) - b_j) \geq v_j \] \[ \iff x \in Z_{\AA}^{\BB}. \]
Thus $\# \mathcal{U} = \zz_{\AA}^{\BB}$.
Applying Theorem \ref{STRONGALONFUREDI} to $\overline{R}$, $\overline{Q}$ and $\overline{A}$, we get that $\# Z_{\AA}^{\BB}= 0$
\[ \# Z_{\AA}^{\BB} \geq
\mm(\# A_1,\ldots,\# A_n;\sum_{i=1}^n \# A_i - \deg \overline{Q})  \] \[\geq \mm(\# A_1,\ldots,\# A_n;\sum_{i=1}^n \# A_i - \sum_{j=1}^r (q^{v_j} -\# B_j) \deg(f_j)). \qedhere \]
\end{proof}


\section{Applications}

\subsection{Hypergraphs}
\textbf{} \\ \\ \noindent
\newcommand{\FF}{\mathcal{F}}
A \textbf{hypergraph} is a finite sequence $\FF = (\FF_1,\ldots,\FF_n)$
of finite subsets of some fixed set $X$.  We say that $n$ is the \textbf{length} of $\FF$.  The \textbf{maximal degree} of $\FF$ is $\max_{x \in X} \# \{ 1 \leq i \leq n \mid x \in \FF_i\}$.  For $m \in \Z^+$ and $\varnothing \neq B \subset  \Z/m\Z$,
\[ N_{\FF}(m,B) = \# \{ J \subset \{1,\ldots,n\} \mid \# (\bigcup_{i \in J} \FF_i) \in B \pmod{m} \}, \]
and for $n,d \in \Z^+$, let
\[ \mathcal{N}_{n,d}(m) = \min N_{\FF}(m,0), \]
the minimum ranging over set systems of length $n$ and maximal degree at most $d$.  Let $f_d(m)$ be the least $n \in \Z^+$ such that for any degree $d$ set system $\FF$ of length $n$, there is a nonempty subset $J \subset \{1,\ldots,n\}$ such that $m \mid \# (\bigcup_{i \in J} \FF_i)$.   Thus
\begin{equation}
\label{AKLMRSEQ1}
 f_d(m) = \min \{n \in \Z^+ \mid \mathcal{N}_{n,d}(m) \geq 2 \}.
\end{equation}


\begin{thm}
\label{AKLMRS}
Let $p$ be a prime, and let $\varnothing \subset B \subset \Z/p^v \Z$
be a subset, no two distinct elements of which are congruent modulo $p$.  Let $d,n \in \Z^+$, and let $\FF = (\FF_1,\ldots,\FF_n)$
be a hypergraph of maximal degree at most $d$.  Then: \\
a) $\mathcal{N}_{\FF}(p^v,B)$ is either $0$ or at least $2^{n-d(p^v-\# B)}$.\\
b) If $0 \in B$ and $n > d(p^v-\# B)$, then there is $\varnothing \neq J \subset
\{1,\ldots,n\}$ such that $p^v \mid \# \bigcup_{i \in J} \FF_i$.
\end{thm}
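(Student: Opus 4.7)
The plan is to encode each subset $J \subset \{1, \ldots, n\}$ by its characteristic vector in $\{0, 1\}^n$ and to represent the map $J \mapsto \# \bigcup_{i \in J} \FF_i$ by a single polynomial of degree at most $d$. Both parts then follow directly from Corollary \ref{MAINCORZ}.

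First I would construct the polynomial. Put $X = \bigcup_{i=1}^n \FF_i$. For each $v \in X$, the polynomial
\[ \chi_v(t_1, \ldots, t_n) := 1 - \prod_{i : v \in \FF_i} (1 - t_i) \in \Z[t_1, \ldots, t_n] \]
has degree equal to $\#\{i : v \in \FF_i\}$, which is at most $d$ by the maximal-degree hypothesis on $\FF$; moreover $\chi_v(x) = 1$ if $v \in \bigcup_{i : x_i = 1} \FF_i$ and $\chi_v(x) = 0$ otherwise, for every $x \in \{0, 1\}^n$. Setting $f := \sum_{v \in X} \chi_v$, I obtain a polynomial in $\Z[t_1, \ldots, t_n]$ of degree at most $d$ satisfying $f(x) = \# \bigcup_{i : x_i = 1} \FF_i$ for every $x \in \{0, 1\}^n$.

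Next I would reduce $f$ modulo $p^v$ and apply Corollary \ref{MAINCORZ}a) to this single polynomial, with input sets $A_i = \{0, 1\}$, target set $B_1 = B$, and $v_1 = v$. The hypothesis that no two distinct elements of $B$ are congruent modulo $p$ is precisely the one needed by the corollary. Under the bijection between subsets $J \subset \{1, \ldots, n\}$ and their indicator vectors in $\{0,1\}^n$, the set $Z_\AA^\BB$ corresponds exactly to the collection counted by $\mathcal{N}_\FF(p^v, B)$, so the corollary yields $\mathcal{N}_\FF(p^v, B) = 0$ or
\[ \mathcal{N}_\FF(p^v, B) \geq 2^{n - (p^v - \# B) \deg f} \geq 2^{n - d(p^v - \# B)}, \]
which is (a).

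Finally, for (b) I would observe that when $0 \in B$, the zero vector (corresponding to $J = \varnothing$) satisfies $f(\mathbf{0}) = 0 \in B$, which is exactly the hypothesis of Corollary \ref{MAINCORZ}b). That corollary then furnishes a nonzero $x \in Z_\AA^\BB$ as soon as $n > d(p^v - \# B)$, and such an $x$ is the indicator vector of a nonempty $J$ with $p^v \mid \# \bigcup_{i \in J} \FF_i$. There is no substantive obstacle to this plan: the only step with any real content is the polynomial construction, whose degree bound matches the maximal-degree hypothesis on $\FF$ by design, and the rest is a direct invocation of the Main Theorem in the form already packaged as Corollary \ref{MAINCORZ}.
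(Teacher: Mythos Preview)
Your proposal is correct and essentially identical to the paper's proof: your polynomial $f=\sum_{v\in X}\bigl(1-\prod_{i:\,v\in\FF_i}(1-t_i)\bigr)$, once the products are expanded, is literally the paper's inclusion--exclusion polynomial $h(t)=\sum_{\varnothing\neq J}(-1)^{\#J+1}\bigl(\#\bigcap_{j\in J}\FF_j\bigr)\prod_{j\in J}t_j$, and both proofs then invoke Corollary~\ref{MAINCORZ}a) and b). (Note that in part~(b) what Corollary~\ref{MAINCORZ}b) actually yields is $\#\bigcup_{i\in J}\FF_i\in B\pmod{p^v}$ rather than $\equiv 0$; the paper's proof has the same feature, so the stated conclusion appears to be a slip for the intended ``$\in B$''.)
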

\begin{proof}
Put
\[ h(t_1,\ldots,t_n) = \sum_{\varnothing \neq J \subset \{1,\ldots,n\}}
(-1)^{\# J + 1}  ( \# \bigcap_{j \in J} \FF_i) \prod_{j \in J} t_j. \]
Then $\deg h \leq d$ and $h(0) = 0$.  For any $x \in \{0,1\}^n$, let
$J_x = \{ 1 \leq j \leq n \mid x_j = 1\}$.  The Inclusion-Exclusion Principle implies
\[ h(x) = \# \bigcup_{j \in J_x} \FF_j, \]
so $\mathcal{N}_{\FF}(p^v,B) = \{x \in \{0,1\}^n \mid h(x) \in B \pmod{p^v} \}$.
Applying Theorem \ref{MAINCORZ}a) establishes part a), and applying
Theorem \ref{MAINCORZ}b) establishes part b).
\end{proof}
\noindent
When $B = \{0\}$, Theorem \ref{AKLMRS}a) is \cite[Thm. 4.8]{Clark-Forrow-Schmitt14}a) and Theorem \ref{AKLMRS}b) gives the upper bound in a result of Alon-Kleitman-Lipton-Meshulam-Rabin-Spencer \cite[Thm. 1]{AKLMRS91}.  They also showed that $f_d(m) \geq d(m-1)+1$, so Theorem \ref{AKLMRS}b) is sharp when $\# B = 1$.  The following example extends this construction and implies that Theorem \ref{AKLMRS}b) is
sharp for all $d,\# B \in \Z^+$.

\begin{example}(J.R. Schmitt)
\label{SCHMITTEX}
Let $b,d \in \Z^+$.  Choose $m,a \in \Z^+$ with $m > b$ and $\gcd(a,m) = 1$.  Let $\{A_{i,j}\}_{1 \leq i \leq m-b, \ 1 \leq j \leq d}$ be pairwise disjoint sets, each of cardinality $m$.  Let $\{V_i\}_{1 \leq i \leq m-b}$
be disjoint sets, each of cardinality $a$ and disjoint from all the $A_{ij}$.  Put \[B=\{m, m-a, m-2a, \ldots, m-(b-1)a\} \subset \Z/m\Z, \] and \[\mathcal{F} = \{A_{ij} \cup V_i\}_{1 \leq i \leq m-b,1 \leq j \leq d}. \] Then $\# {\mathcal{F}} = d(m-b)$ but $ \# \bigcup_{F \in {\mathcal{F}}_0} \notin B\pmod{m}$ for any $\emptyset \neq {\mathcal{F}}_0 \subset {\mathcal{F}}$.

\end{example}


\subsection{Fat Davenport Constants}
\textbf{} \\ \\ \noindent
Let $(G,+)$ be a nontrivial finite commutative group.  The \textbf{Davenport constant} $D(G)$
is the least number $n$ such that for any sequence $\{g_i\}_{i =1}^n$ in $G$,
there is a nonempty subset $J \subset \{1,\ldots,n\}$ such that $\sum_{i \in J} g_i = 0$.  There are unique integers $1 < n_1 \mid n_2 \ldots \mid n_r$ such that
\[ G \cong \bigoplus_{i=1}^r \Z/n_i \Z; \]
let us call $r$ the \textbf{rank} of $G$.  The pigeonhole principle implies
\[ D(G) \leq \# G. \]
Let $e_i \in \bigoplus_{i=1}^r \Z/n_i \Z$ be the element with $i$th coordinate
$1$ and all other coordinates zero.  Then the sequence
\[ \overbrace{e_1,\ldots,e_1}^{n_1-1},\overbrace{e_2,\ldots,e_2}^{n_2-1},
\ldots,\overbrace{e_r,\ldots,e_r}^{n_r-1} \]
shows that
\begin{equation}
\label{DAVENPORTEQ2}
D(G) \geq 1 + \sum_{i=1}^r (n_i-1) =: d(G).
\end{equation}
It is now clear that $d(G) = D(G)$ when $G$ has rank $1$ (i.e., is cyclic).  Olson
showed that this equality also holds when $G$ has rank $2$ and when $G$ is a $p$-group of arbitrary rank.  There are infinitely groups of rank $4$ with $d(G) < d(G)$.  Whether $d(G) = D(G)$ for all groups of rank $3$, or for all
groups $G$ with $n_1 = \ldots = n_r$, are major open questions.  Olson used group ring methods (which, by the way, are used to prove the best known upper bound for $D(G)$ for a general group $G$, see \cite{AGP94}) to prove $d(G) = D(G)$ for $p$-groups, but
in fact for a $p$-group $G$ the Davenport constant can be expressed in terms of
systems of congruences modulo powers of $p$ with solutions in $A_i = \{0,1\}$.  This
was first observed by Schanuel \cite{Schanuel74}.
\\ \\
Let $G$ be a finite commutative group of exponent $e$, and let $\AA = \{A_i\}_{i=1}^{\infty}$ be a sequence of finite, nonempty subsets of $\Z$.  Then given a sequence $\underline{g} = \{g_i\}_{i=1}^{n}$ in $G$ we may associate an \textbf{\AA-weighted subsequence} $\{a_i g_i\}_{i=1}^{n}$ by
selecting $a_i \in A_i$.  We say an $\AA$-weighted subsequence is \textbf{empty} if $a_i = 0 \in A_i$ for each $i$.
\\ \\
When each $A_i$ contains $0$ and at least one other element not divisible by $\exp G$, we define the \textbf{weighted Davenport constant} $D_{\AA}(G)$ as the least $n$ such that every sequence $\{g_i\}_{i=1}^n$ in $G$
has a nonempty $\AA$-weighted zero-sum subsequence: i.e., there are $a_1 \in A_1,\ldots,a_n \in A_n$, not all $0$, such that $\sum_{i=1}^n a_i g_i = 0$.  As in the classical case, an immediate pigeonhole argument shows
\[ D_{\AA}(G) \leq \# G. \]


\begin{thm}
\label{TZB}
(Troi-Zannier \cite{Troi-Zannier97}, Brink \cite{Brink11})
Let $G \cong \bigoplus_{i=1}^r \Z/p^{v_i}$ be a $p$-group of exponent $p^v$.  Let $n \in \Z^+$, and let $\AA = (A_1,\ldots,A_n)$
with each $A_i \subset \Z/p^v\Z$ nonempty and such that no two distinct elements are congruent modulo $p$.  If
\[  \sum_{i=1}^n (\# A_i-1) > \sum_{j=1}^r (p^{v_i}-1) = d(G)-1, \]
every sequence of length $n$ in $G$ has a nonempty $\AA$-weighted zero-sum subsequence.
\end{thm}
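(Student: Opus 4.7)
The plan is to realize the zero-sum condition as a system of linear polynomial congruences modulo powers of $p$ and then invoke Theorem \ref{MAINTHMZ} (or more precisely, the nontriviality corollary obtained by combining it with the pigeonhole equivalence (\ref{PHP})). Write $G \cong \bigoplus_{j=1}^r \Z/p^{v_j}\Z$, and given a sequence $(g_1,\ldots,g_n)$ in $G$ decompose each $g_i$ into coordinates $g_i = (g_{i,1},\ldots,g_{i,r})$ with $g_{i,j} \in \Z/p^{v_j}\Z$. Lift each $g_{i,j}$ arbitrarily to $\widetilde{g}_{i,j} \in \Z/p^v\Z$ and form the $r$ linear polynomials
\[
f_j(t_1,\ldots,t_n) = \sum_{i=1}^n \widetilde{g}_{i,j}\, t_i \in \Z/p^v\Z[t_1,\ldots,t_n], \qquad 1 \leq j \leq r.
\]
Each $f_j$ has degree (at most) $1$, and by construction the condition $\sum_{i=1}^n a_i g_i = 0$ in $G$ is equivalent to $f_j(a_1,\ldots,a_n) \equiv 0 \pmod{p^{v_j}}$ for all $j$.

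Next, apply Theorem \ref{MAINTHMZ} to the polynomials $f_1,\ldots,f_r$, the input sets $A_1,\ldots,A_n$ (which satisfy the incongruence hypothesis by assumption), and the one-element output sets $B_j = \{0\}$, with the $v_j$ as given. Since $0 \in A_i$ for each $i$, the zero tuple $(0,\ldots,0)$ lies in $Z_{\AA}^{\BB}$, so $Z_{\AA}^{\BB}$ is nonempty and hence
\[
\# Z_{\AA}^{\BB} \geq \mm\!\left(\# A_1,\ldots,\# A_n;\, \sum_{i=1}^n \# A_i - \sum_{j=1}^r (p^{v_j}-1)\deg f_j\right).
\]
Because $\deg f_j \leq 1$, the second argument of $\mm$ is at least $\sum_{i=1}^n \# A_i - \sum_{j=1}^r (p^{v_j}-1)$.

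Finally, translate the hypothesis. The inequality $\sum_{i=1}^n (\# A_i - 1) > \sum_{j=1}^r (p^{v_j} - 1)$ is equivalent to
\[
\sum_{i=1}^n \# A_i - \sum_{j=1}^r (p^{v_j}-1) > n,
\]
so by the pigeonhole equivalence (\ref{PHP}) we get $\mm(\# A_1,\ldots,\# A_n; \cdot) \geq 2$. Combined with the previous bound this yields $\# Z_{\AA}^{\BB} \geq 2$, so in addition to the trivial tuple there exists $(a_1,\ldots,a_n) \in \prod_{i=1}^n A_i$, not identically zero, with $\sum a_i g_i = 0$; that is, a nonempty $\AA$-weighted zero-sum subsequence.

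There is no real obstacle here: once the zero-sum problem is encoded coordinatewise over $\Z/p^v\Z$, the only nontrivial input is the Main Theorem, and the rest is bookkeeping (degree $\leq 1$, $\# B_j = 1$, and the trivial solution $0$ guaranteeing nonemptiness). The mild point worth checking carefully is that lifting the coefficients $g_{i,j}$ from $\Z/p^{v_j}\Z$ to $\Z/p^v\Z$ does not affect the truth of the congruences modulo $p^{v_j}$, which is immediate.
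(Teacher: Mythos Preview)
Your proof is correct and follows essentially the same route as the paper. The paper does not prove Theorem~\ref{TZB} directly but derives it from Theorem~\ref{FATDAVENPORT}, whose proof is exactly your encoding: write each $g_i$ in coordinates, form the linear polynomials $f_j(t_1,\ldots,t_n) = \sum_i a_j^{(i)} t_i$, apply Theorem~\ref{MAINTHMZ} with $B_j = \{0\}$, and then collapse via the pigeonhole equivalence~(\ref{PHP}).

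One small remark: you invoke $0 \in A_i$ to exhibit the trivial solution and conclude $Z_{\AA}^{\BB} \neq \varnothing$, but this hypothesis is not explicitly stated in Theorem~\ref{TZB}. It is, however, implicit in the phrase ``nonempty $\AA$-weighted zero-sum subsequence'' (which presupposes that the empty one, namely all $a_i = 0$, is available to be excluded), and the paper makes the assumption explicit in its own generalization, Theorem~\ref{FATDAVENPORT}b). So this is a harmless omission in the statement rather than a gap in your argument.
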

\noindent
Troi-Zannier's proof uses group ring methods.  They remark on their inability to push through a Chevalley-Warning style proof in the general case; this is what Brink does using the Schauz-Wilson-Brink Theorem.
\\ \\
When $A_1 = \ldots = A_n = A$, we write $D_{A}(G)$ for $D_{(A_1,\ldots,A_n)}(G)$.  Most of the study of weighted Davenport
constants has been devoted to this case.  In this case, Theorem \ref{TZB} becomes the upper bound
\begin{equation}
 \label{TZBEQ}
 \text{For all $p$-groups $G$, }
  D_A(G) \leq \bigg{\lceil} \frac{d(G)}{\# A-1} \bigg{\rceil}.
  \end{equation}
Thangadurai gives some evaluations of
$D_A(G)$ when $G = \Z/p\Z$ using elementary methods \cite[Thm. 2]{Thangadurai07} and when $G = \bigoplus_{i=1}^r \Z/p\Z$ using (\ref{TZBEQ}) \cite[Cor. 1.2]{Thangadurai07}. \\ \indent The case $A = \{-1,0,1\}$ is especially natural; $D_{\{-1,0,1\}}(G)$ is called
the \textbf{plus-minus weighted Davenport constant} and denoted $D_{\pm}(G)$.  Its study predates the general weighted case \cite{Stein77}, \cite{Mead-Narkiewicz82} and, in some form, the unweighted Davenport constant \cite{Shannon56}.  However, in this case upper bounds of the form (\ref{TZBEQ}) are quite far from the truth.  Some reflection on the uniqueness of binary expansions shows
\[ D_{\pm}(\Z/n\Z) = \lfloor \log_2 n \rfloor + 1, \]
while for any finite group $G$, an elementary 
argument \cite[Thm. 4.1]{DAGS12} gives 
\[ D_{\pm}(G) \leq \lfloor \log_2 \# G \rfloor + 1. \]
For recent work giving lower bounds and some equalities 
for $D_{\pm}(G)$ see \cite{MOS14}. \\ \indent
Thus certain choices of $A \subset \Z$ lead to a behavior of $D_A(G)$ which is much different from the extremal case (attained for $A = \{0,1\}$).  It would be interesting to further understand this phenomenon.
\\ \\
Here is a further generalization of the Davenport constant.   We give ourselves: \\
$\bullet$ A finite group $G = \bigoplus_{i=1}^r \Z/n_i \Z$ with $1 < n_1 \mid \ldots \mid n_r$ (so $G$ has exponent $n_r$);\\
$\bullet$ A sequence $\AA = \{A_i\}_{i=1}^{\infty}$ of nonempty finite subsets of $\Z$.  \\
$\bullet$ A nonempty subset $B \subset G$.
\\ \\
For a sequence $\underline{g} = \{g_i\}_{i=1}^n$ in $G$, let \[N_{\AA}^B(\underline{g}) = \# \{a \in A \mid \sum_{i=1}^n a_i g_i \in B\}, \]
i.e., the number of $\AA$-weighted subsequences of $\underline{g}$ with sum in $B$.  For $b \in B$, we put 
$N_{\AA}^b(G) = N_{\AA}^{\{b\}}(G)$.  Also put
\[ N_{\AA}^B(G,n) = \min_{\underline{g} \in G^n \mid \sum(\underline{g}) \cap B \neq \varnothing}  N_{\AA}^B(\underline{g}), \]
that is, we range over all sequences of length $n$ in $G$ and take the least positive number of $\AA$-weighted subsequences with sum in $B$.
\\ \\
If every $A_i$ contains $0$ and also at least one element not divisible by
$\exp G$, and $0 \in B$ then  we
define the \textbf{fat Davenport constant} $D_{\AA}^B(G)$ to be the least $n \in \Z^+$ such that every length $n$ sequence in $G$ has a nonzero $\AA$-weighted subsequence with sum in $B$.  We have
\[ D_{\AA}^B(G) \leq D_{\AA}(G) \leq D(G). \]
We write $D^B(G)$ for $D_{ \{0,1\}}^B$.  Evidently $D^B(G) \leq D(G)$.  It would be interesting to give upper bounds on $N^B(\underline{g})$ depending only on $\# B$ and the length of $\underline{g}$.  
\\ \\
For a sequence $\underline{g}$, let \[\Sigma (\underline{g}) = \left\{ \sum_{i \in J} g_i \mid J \subset \{1,\ldots,n\} \right\} \]
be the set of all subsequence sums of $\underline{g}$.
\\ \\
For a general group $G$ we have little insight into the quantities $D_{\AA}^B(G)$ and $N_{\AA}^B(G,n)$, and we will content ourselves here with a few observations.

\begin{thm}
\label{NGTHM} \textbf{} \\
Let $(G,+)$ be a finite commutative group, and let $\underline{g} = \{g_i\}_{i=1}^n$ be a sequence in $G$. \\
a) (\cite[Thm. 2]{Olson69b}) We have $N^0(\underline{g}) = \max \{1,2^{n+1-D(G)} \}$.  \\
b) (\cite[Thm. 2]{CCQWZ11}) For all $x \in \Sigma(\underline{g})$, we have $N^x(\underline{g}) \geq  2^{n+1-D(G)}$. \\
c) (\cite[Prop. 4]{CCQWZ11}) If for some $y \in G$ we have $N^y(\underline{g}) =
2^{n+1-D(G)}$, then $N^x(\underline{g}) \geq 2^{n+1-D(G)}$ for all $x \in G$.
\end{thm}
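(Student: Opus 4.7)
The plan is to derive parts (a) and (b) from the Main Theorem via the $p$-Sylow decomposition of $G$, and then obtain (c) by a short sequence-extension argument that exploits (b). Write $G=\bigoplus_p G_p$ with $G_p\cong \bigoplus_{j=1}^{r_p}\Z/p^{v_{p,j}}\Z$, and write each $g_i=(g_{i,p,j})$ in coordinates. A subsequence $S\subset\{1,\ldots,n\}$ has $\sum_{i\in S} g_i=x$ in $G$ if and only if, for every pair $(p,j)$, the linear congruence $\sum_{i\in S} g_{i,p,j}\equiv x_{p,j}\pmod{p^{v_{p,j}}}$ holds.

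For part (b), fix $x\in\Sigma(\underline{g})$ so that the system above is consistent. When $G$ is a $p$-group, Corollary \ref{MAINCORZ}a) applied to the linear polynomials $f_j(\tt)=\sum_i g_{i,j}t_i$ with singleton output targets $B_j=\{x_j\}$ and variables $A_i=\{0,1\}$ gives $N^x(\underline{g})\geq 2^{n-\sum_j (p^{v_j}-1)}=2^{n-(d(G)-1)}\geq 2^{n+1-D(G)}$, the last inequality being the elementary $d(G)\leq D(G)$ from (\ref{DAVENPORTEQ2}). For general $G$, I would run the Alon-F\"uredi argument of the Main Theorem in parallel across primes, assembling the per-prime polynomials of Lemma \ref{GENERALSAFKLEMMA} into a joint non-vanishing condition that captures all simultaneous congruences, and conclude $N^x(\underline{g})\geq 2^{n-\sum_p (d(G_p)-1)}\geq 2^{n+1-D(G)}$; the final inequality is $D(G)-1\geq \sum_p (d(G_p)-1)$, obtained by concatenating extremal $p$-Sylow sequences. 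Part (a) is then the $x=0$ instance of (b) (the empty subsequence witnesses $0\in\Sigma(\underline{g})$), and the matching upper bound realizing the $\max$ is witnessed by the extremal sequence of (\ref{DAVENPORTEQ2}) counted directly.

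For part (c), I argue by contradiction: suppose $\Sigma(\underline{g})\subsetneq G$. Then the translate $y-\Sigma(\underline{g})$ has the same cardinality as $\Sigma(\underline{g})$, so it too is a proper subset of $G$; hence there exists $h\in G$ with $y-h\notin\Sigma(\underline{g})$, giving $N^{y-h}(\underline{g})=0$. Form the extended sequence $\underline{g}'=(g_1,\ldots,g_n,h)$ of length $n+1$. Since $y\in\Sigma(\underline{g})\subset\Sigma(\underline{g}')$, part (b) applied to $\underline{g}'$ yields $N^y(\underline{g}')\geq 2^{(n+1)+1-D(G)}=2^{n+2-D(G)}$. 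On the other hand, partitioning subsequences of $\underline{g}'$ according to whether they contain the last term gives
\[ N^y(\underline{g}')=N^y(\underline{g})+N^{y-h}(\underline{g})=2^{n+1-D(G)}+0=2^{n+1-D(G)}, \]
a contradiction. Hence $\Sigma(\underline{g})=G$, and (b) delivers the uniform bound $N^x(\underline{g})\geq 2^{n+1-D(G)}$ for all $x\in G$. The main obstacle is the Sylow-combining step in (b): spelling out how the per-prime valuation calculus of Lemma \ref{GENERALSAFKLEMMA} assembles across distinct primes into a single Alon-F\"uredi application requires some care (cross-prime valuations in a product polynomial do not automatically vanish), but once that is in place, parts (a) and (c) follow cleanly as sketched.
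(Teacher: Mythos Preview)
The paper does not prove Theorem \ref{NGTHM}; it is quoted verbatim from Olson \cite{Olson69b} and Chang--Chen--Qu--Wang--Zhang \cite{CCQWZ11}. Those proofs are purely combinatorial. For part (a), Olson's argument is a short induction on $n-D(G)$: given a sequence of length $D(G)-1+k$, take a nonempty zero-sum subsequence $g_{i_1}+\cdots+g_{i_t}=0$ (with $t\le D(G)$, by definition of $D(G)$); by induction the shorter sequence obtained by deleting $g_{i_1}$ has at least $2^{k-1}$ zero-sum subsequences, and pairing $J\leftrightarrow \{i_1,\ldots,i_t\}\mathbin{\triangle} J$ produces another $2^{k-1}$ containing $i_1$. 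Parts (b) and (c) in \cite{CCQWZ11} are proved by similar elementary manipulations. No polynomial method is involved, and the arguments work uniformly for all finite abelian $G$.

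Your approach is genuinely different, and the obstacle you flag is a real gap, not a matter of care. The Main Theorem is intrinsically one prime at a time: the mechanism is that $\overline{Q}(x)\neq 0$ in $R/\pp^{c+1}$ detects the solution set, and Alon--F\"uredi then bounds the nonvanishing locus. For several primes $p_1,\ldots,p_k$ you would need a single polynomial over some ring whose \emph{nonvanishing} locus is exactly the intersection $\bigcap_p S_p$ of the per-prime solution sets. Over $\Z/M\Z\cong\prod_p \Z/p^{c_p+1}\Z$, nonvanishing means ``at least one component nonzero,'' which detects $\bigcup_p S_p$, not $\bigcap_p S_p$; taking a product $\prod_p Q_p$ in $\Z$ fails because $\ord_p(Q_q(x))$ for $q\neq p$ is uncontrolled, exactly as you note. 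There is no evident way to package ``all congruences hold simultaneously'' as a nonvanishing condition for a polynomial of degree $\sum_p(d(G_p)-1)$. So your route proves (b) only for $p$-groups (where, incidentally, it recovers the bound with $d(G)$ and you then use $d(G)\le D(G)$), and the general case is not established.

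Your argument for (c), assuming (b), is correct and rather nice: if $\Sigma(\underline{g})\subsetneq G$, pick $h$ with $y-h\notin\Sigma(\underline{g})$, extend to $\underline{g}'$ of length $n+1$, and compare $N^y(\underline{g}')\ge 2^{n+2-D(G)}$ from (b) with the decomposition $N^y(\underline{g}')=N^y(\underline{g})+N^{y-h}(\underline{g})=2^{n+1-D(G)}$. This is in the spirit of the argument in \cite{CCQWZ11}.
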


\begin{cor}
\label{NGCOR}
Let $\underline{g}$ be a sequence of length $n$ in $G$, and let $\{0\}
\subsetneq B \subset G$.  Then: \\
a) We have $N^B(\underline{g}) \geq (\# \sum(\underline{g}) \cap B) \cdot 2^{n+1-D(G)}$. \\
b) We have that $N^B(\underline{g})$ is $0$ or is at least $2^{n+1-D(G)}+1$.
\end{cor}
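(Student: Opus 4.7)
The plan is to deduce both parts from Theorem \ref{NGTHM} via the elementary decomposition
\[
N^B(\underline{g}) = \sum_{b \in B} N^b(\underline{g}),
\]
after noting that $N^b(\underline{g}) = 0$ whenever $b \notin \Sigma(\underline{g})$; thus only summands indexed by $b \in B \cap \Sigma(\underline{g})$ contribute.

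For part (a) this is immediate: Theorem \ref{NGTHM}(b) provides $N^b(\underline{g}) \geq 2^{n+1-D(G)}$ for every $b \in B \cap \Sigma(\underline{g})$, and summing over these $\#(B \cap \Sigma(\underline{g}))$ terms yields the stated lower bound.

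For part (b), the empty subsequence contributes $1$ to $N^0(\underline{g})$, and since $0 \in B$ by hypothesis this already gives $N^B(\underline{g}) \geq 1$. The strict containment $\{0\} \subsetneq B$ supplies an element $b_0 \in B \setminus \{0\}$. If $b_0 \in \Sigma(\underline{g})$, Theorem \ref{NGTHM}(b) gives $N^{b_0}(\underline{g}) \geq 2^{n+1-D(G)}$, and accounting separately for the empty-subsequence contribution to $N^0(\underline{g})$ yields
\[
N^B(\underline{g}) \geq N^0(\underline{g}) + N^{b_0}(\underline{g}) \geq 1 + 2^{n+1-D(G)}.
\]
If, on the other hand, no element of $B \setminus \{0\}$ is a subsequence sum, then the nontrivial part of $N^B(\underline{g})$ vanishes --- this is the ``$0$'' branch of the stated dichotomy.

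The main delicacy is in part (b): the ``$+1$'' improvement over what one obtains by blindly applying part (a) with $\#(B \cap \Sigma(\underline{g})) \geq 1$ comes precisely from isolating the contribution of the empty subsequence from the Theorem \ref{NGTHM}(b) bound applied to a nonzero target $b_0 \in B$; the remainder is bookkeeping.
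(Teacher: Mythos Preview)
Your argument for part (a) is correct and matches the paper's.

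Your argument for part (b) has a genuine gap. The hypothesis $\{0\} \subsetneq B$ forces $0 \in B$, and since the empty subsequence has sum $0$ we always have $0 \in \Sigma(\underline{g}) \cap B$; hence $N^B(\underline{g}) \geq N^0(\underline{g}) \geq 1$ and the ``$0$'' branch of the dichotomy is never actually attained. So in your second case---when no element of $B \setminus \{0\}$ lies in $\Sigma(\underline{g})$---you are \emph{not} in the $0$ branch: you have $N^B(\underline{g}) = N^0(\underline{g})$ and must still establish $N^0(\underline{g}) \geq 2^{n+1-D(G)} + 1$. Theorem~\ref{NGTHM}(a),(b) only yield $N^0(\underline{g}) \geq 2^{n+1-D(G)}$, so your ``empty subsequence plus one nonzero target'' bookkeeping does not supply the $+1$ here.

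The paper's proof closes exactly this gap with Theorem~\ref{NGTHM}(c), which you did not invoke. One picks any $y \in \Sigma(\underline{g}) \cap B$; if $N^y(\underline{g}) > 2^{n+1-D(G)}$ then $N^B(\underline{g}) \geq N^y(\underline{g}) \geq 2^{n+1-D(G)}+1$ by integrality, while if $N^y(\underline{g}) = 2^{n+1-D(G)}$ then part (c) forces $N^x(\underline{g}) \geq 2^{n+1-D(G)}$ for \emph{every} $x \in G$, and summing over the $\#B \geq 2$ elements of $B$ gives $N^B(\underline{g}) \geq 2\cdot 2^{n+1-D(G)} > 2^{n+1-D(G)}$. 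Equivalently, in your problematic case one observes that $N^0(\underline{g}) = 2^{n+1-D(G)}$ would, via (c), force every $b_0 \in B$ into $\Sigma(\underline{g})$, a contradiction; so $N^0(\underline{g}) > 2^{n+1-D(G)}$. Part (c) is the missing ingredient.
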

\begin{proof}
a) By Theorem \ref{NGTHM}b),
$b \in B$ which occurs as a subsequential sum of $\underline{g}$ must occur
at least $2^{n+1-D(G)}$ times.  \\
b) We have $N^B(\underline{g}) = 0$ iff $\sum(\underline{g}) \cap B = \varnothing$.  We may assume this is not the case: there is $y \in \sum(\underline{g}) \cap B$, and then part a) gives $N^B(\underline{g}) \geq 2^{n+1-D(G)}$.  Certainly
$N^B(\underline{g}) \geq N^y(\underline{g})$, and by
Theorem \ref{NGTHM}c), if
$N^y(\underline{g}) = 2^{n+1-D(G)}$, then \[N^B(\underline{g}) \geq (\# B)2^{n+1-D(G)} > 2^{n+1-D(G)}. \qedhere \]
\end{proof}

\begin{remark}
Suppose $0 \in B$
and $B$ is a large subset of $G$.  When $\sum(\underline{g}) \cap B$ is
large, Corollary \ref{NGCOR}a) gives a good lower bound on $N^B(\underline{g})$.  When $\sum(\underline{g}) \cap B$ is small, then
there ought to be significantly more than $2^{n+1-D(G)}$ zero-sum subsequences.
\end{remark}

\begin{remark}
If $B$ is a subgroup of $G$, then $D_{\AA}^B(G) = D_{\AA}(G/B)$.
\end{remark}
\noindent
However, when $G$ is a $p$-group, our Main Theorem can be applied.

\begin{thm}
\label{FATDAVENPORT}
Let $p$ be a prime; let $1 \leq v_1 \leq \ldots \leq v_r$, and let $G = \bigoplus_{j=1}^r \Z/p^{v_j} \Z$.  Let $\{A_i\}_{i=1}^{\infty}$ be a sequence
of subsets of $\Z/p^{v_r}\Z$, and for $1 \leq j \leq r$ let
$B_j \subset \Z/p^{v_j} \Z$.  Suppose each $A_i$ and $B_j$ is nonempty
and has no two distinct elements congruent modulo $p$.  Let $B = \prod_{j=1}^r B_j$.
Let $\underline{g} = \{g_i\}_{i=1}^n$ be a sequence in $G$. \\
a) The number of $\AA$-weighted subsequences of $\underline{g}$ with $\sum_{i=1}^n a_i g_i \in B$
is $0$ or at least
\[ \mm\left(\# A_1,\ldots,\# A_n;\sum_{i=1}^n \# A_i - \sum_{j=1}^r (p^{v_j} - \# B_j)\right). \]
b) If $0$ lies in each $A_i$ and $B_j$, then there is a nonempty $\AA$-weighted subsequence of $\underline{g}$ with sum $\sum_{i=1}^n a_i g_i \in B$
if
\[ \sum_{i=1}^n \left(\# A_i -1\right) > \sum_{j=1}^r (p^{v_j} - \# B_j). \]
\end{thm}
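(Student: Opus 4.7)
The plan is to reduce directly to Theorem \ref{MAINTHMZ}. Write $g_i = (g_{i,1},\ldots,g_{i,r})$ with $g_{i,j} \in \Z/p^{v_j}\Z$, and work inside the ring $R = \Z/p^{v_r}\Z$. For each $j$, lift $B_j \subset \Z/p^{v_j}\Z$ to a subset $\widetilde{B}_j \subset R$ by choosing one representative per element, and lift each $g_{i,j}$ to $\widetilde{g}_{i,j} \in R$. The no-two-elements-congruent-mod-$p$ property of $B_j$ transfers to $\widetilde{B}_j$, since the quotient $R \twoheadrightarrow \Z/p^{v_j}\Z$ factors through reduction mod $p$. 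Define the linear polynomials
\[ f_j(t_1,\ldots,t_n) = \sum_{i=1}^n \widetilde{g}_{i,j}\, t_i \in R[t_1,\ldots,t_n], \]
each of degree at most $1$.

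For any $a = (a_1,\ldots,a_n) \in \prod_i A_i$, we have $\sum_i a_i g_i \in B$ in $G$ if and only if, for every $j$, there exists $\widetilde{b}_j \in \widetilde{B}_j$ with $f_j(a) \equiv \widetilde{b}_j \pmod{p^{v_j}}$, i.e.\ $f_j(a) \in \widetilde{B}_j \pmod{p^{v_j}}$. Thus the count of $\AA$-weighted subsequences of $\underline{g}$ whose sum lies in $B$ is exactly the quantity $\# Z_\AA^{\widetilde{\BB}}$ of Theorem \ref{MAINTHMZ} applied with $v = v_r$. That theorem gives either emptiness or the lower bound
\[ \mm\left(\# A_1,\ldots,\# A_n;\ \sum_{i=1}^n \# A_i - \sum_{j=1}^r (p^{v_j} - \# B_j) \deg f_j\right). \]
Since $\deg f_j \leq 1$ and $\mm(a_1,\ldots,a_n; N)$ is nondecreasing in $N$, this is at least the bound claimed in part (a).

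For part (b), the assumptions $0 \in A_i$ for every $i$ and $0 \in B_j$ for every $j$ guarantee that the all-zero weighted subsequence is a valid solution, so the count is nonempty and thus bounded below by the $\mm$ expression. The hypothesis $\sum_i(\# A_i - 1) > \sum_j(p^{v_j} - \# B_j)$ rewrites as $\sum_i \# A_i - \sum_j(p^{v_j} - \# B_j) > n$, so by the pigeonhole principle (\ref{PHP}) that $\mm$ value is at least $2$; hence a \emph{nontrivial} $\AA$-weighted subsequence with sum in $B$ exists. The substantive work has been done in the Main Theorem; the only subtlety is the encoding step, and the only thing to verify is that lifting $B_j$ to $R$ preserves both the cardinality and the mod-$p$ incongruence condition, which is immediate.
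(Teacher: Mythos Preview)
Your proof is correct and follows the same route as the paper: encode the coordinates of the $g_i$ as linear polynomials and invoke Theorem \ref{MAINTHMZ}, then use (\ref{PHP}) for part (b). Your version is in fact more careful than the paper's one-line proof, since you make explicit the lifting of $B_j$ and the $g_{i,j}$ from $\Z/p^{v_j}\Z$ to $\Z/p^{v_r}\Z$ and note that $\deg f_j$ may be $0$, handling this via the monotonicity of $\mm$ in $N$.
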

\begin{proof}
For $1 \leq i \leq n$ and $1 \leq j \leq r$, let $g_i = (a_1^{(i)},\ldots,a_r^{(i)})$ and $f_j(t_1,\ldots,t_n) = \sum_{i=1}^n a_j^{(i)} t_i$.  Apply Theorem \ref{MAINTHMZ}.
\end{proof}

\begin{remark}
Taking $B = \{0\}$ gives \cite[Thm. 4.6]{Clark-Forrow-Schmitt14}.  The latter implies Corollary \ref{TZB}, which implies Olson's Theorem that $D(G) = d(G)$ for $p$-groups.
\end{remark}
\noindent
The following result is the generalization of Theorem \cite[Thm. 4.11]{Clark-Forrow-Schmitt14} obtained by applying the Main Theorem.  The proof carries over immediately and is omitted.

\begin{thm}
\label{EGZTHM}
Let $k,r$, $v_1 \leq \ldots \leq v_r$ be positive integers, and let
$G = \bigoplus_{i=1}^r \Z/p^{v_i} \Z$.  Let $A_1,\ldots,A_n,B_1,\ldots,B_r$ be nonempty subsets of $\Z$, each nonempty with distinct elements pairwise incongruent modulo $p$ and with $0 \in A_i$ for all $i$.  Put
\[ A = \prod_{i=1}^n A_i, \ a_M = \max \# A_i. \]
For $x \in G$, let  $\operatorname{EGZ}_{A,k}(B)$
be the number of $(a_1,\ldots,a_n) \in A$ such that $a_1 x_1 + \ldots + a_n x_n \in \prod_{j=1}^r B_j$ and $p^k \mid \# \{ 1 \leq i \leq n \mid a_i \neq 0\}$.  Then either
$\operatorname{EGZ}_{A,k}(B) = 0$ or
\[\operatorname{EGZ}_{A,k}(B) \geq \mm(\# A_1,\ldots,\# A_n; \# A_1 + \ldots + \# A_n - \sum_{j=1}^r (p^{v_j}-\# B_j) - (a_M-1)(p^k-1)). \]

\end{thm}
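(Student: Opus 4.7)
The plan is to deduce Theorem \ref{EGZTHM} from Theorem \ref{MAINTHMZ} by encoding both constraints---the weighted sum lying in $\prod_j B_j$, and the divisibility of $\#\{i : a_i \neq 0\}$ by $p^k$---as a system of polynomial congruences over $\Z/p^v\Z$ with $v = \max(v_r, k)$. This is the same template used for \cite[Thm. 4.11]{Clark-Forrow-Schmitt14}, now with restricted output sets $B_1,\ldots,B_r$ in place of the all-zero target, so the extra work is purely bookkeeping on degrees.

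For the weighted-sum condition, I would write each $x_i \in G$ as $x_i = (x_i^{(1)},\ldots,x_i^{(r)})$ with $x_i^{(j)} \in \Z/p^{v_j}\Z$ lifted to $\Z/p^v\Z$, and set $f_j(t_1,\ldots,t_n) = \sum_{i=1}^n x_i^{(j)} t_i$ for $1 \leq j \leq r$. Each $f_j$ has degree $1$, and the condition $a_1 x_1 + \ldots + a_n x_n \in \prod_{j=1}^r B_j$ is equivalent to $f_j(a) \in B_j \pmod{p^{v_j}}$ for every $j$.

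To encode $\#\{i : a_i \neq 0\}$ as a polynomial value, I would use Lagrange interpolation over $\Z/p^v\Z$---which is legitimate because Condition (F) ensures that differences of distinct elements of $A_i$ are units in $\Z/p^v\Z$---to build $h_i(t_i) \in \Z/p^v\Z[t_i]$ with $\deg h_i \leq \# A_i - 1 \leq a_M - 1$, $h_i(0) = 0$, and $h_i(c) = 1$ for every $c \in A_i \setminus \{0\}$. Setting $f_{r+1}(t) = \sum_{i=1}^n h_i(t_i)$, we have $\deg f_{r+1} \leq a_M - 1$ and $f_{r+1}(a) = \#\{i : a_i \neq 0\}$, so the divisibility condition becomes $f_{r+1}(a) \equiv 0 \pmod{p^k}$, which corresponds in the Main Theorem to taking $B_{r+1} = \{0\}$ and $v_{r+1} = k$.

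Applying Theorem \ref{MAINTHMZ} to the system $(f_1,\ldots,f_r,f_{r+1})$ with outputs $(B_1,\ldots,B_r,\{0\})$ and exponents $(v_1,\ldots,v_r,k)$, the linear polynomials contribute $\sum_{j=1}^r (p^{v_j} - \# B_j)$ to the subtracted degree sum, while $f_{r+1}$ contributes $(p^k - 1)(a_M - 1)$; this assembles exactly the bound asserted in Theorem \ref{EGZTHM}. The only real obstacle is justifying the interpolation step in a ring with zero divisors and controlling $\deg h_i$ uniformly by $a_M - 1$; both follow quickly from Condition (F), which is the one place the hypotheses on the $A_i$ and $B_j$ genuinely enter.
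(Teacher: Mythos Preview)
Your proposal is correct and is precisely the argument the paper has in mind: the paper omits the proof, noting only that it is obtained from \cite[Thm.~4.11]{Clark-Forrow-Schmitt14} by replacing Theorem~\ref{RVW2T} with the Main Theorem, and that earlier proof uses exactly your linear forms $f_1,\ldots,f_r$ together with the interpolating polynomial $C_{A_i}(t) = 1 - \prod_{a \in A_i \setminus \{0\}} \frac{a-t}{a}$ (your $h_i$) to encode the support-divisibility constraint as an additional congruence of degree $a_M - 1$ with target $\{0\}$. Your observation that Condition~(F) makes the interpolation denominators units in $\Z/p^v\Z$, and that $\deg f_{r+1} \le a_M - 1$ only helps since $\mathfrak m$ is nondecreasing in its last argument, fills in the two small points the paper leaves implicit.
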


\subsection{Divisible Subgraphs}
\textbf{} \\ \\
Here, a \textbf{graph} is a relation $\sim$ -- called \emph{incidence} -- between two finite sets $V$ and $E$ such that every $e \in E$ is incident to exactly two elements of $V$.  If $\# V = r$ we will identify $V$ with $\{1,\ldots,r\}$.  A subgraph
is induced by restricting the incidence relation to a subset $E' \subset E$.  We say
a graph is empty if $E = \varnothing$.  For $q \in \Z^+$, a graph $\GG = (V(\GG),E(\GG))$ is \textbf{q-divisible} if for all $x \in V(G)$, $q \mid \deg x$ \cite{Alon-Friedland-Kalai84}.  An empty graph is $q$-divisible for all $q$.  We say a
graph is \textbf{q-atomic} if it admits no nonempty $q$-divisible subgraph.
\\ \\
For $r \geq 2$ and $q \in \Z^+$, let $E(r,q)$ be the least $n \in \Z^+$ such that
every graph with $r$ vertices and $n$ edges admits a nonempty $q$-divisible subgraph.
we have $E(2,q) = q$ for all $q$; henceforth we suppose $r \geq 3$.

\begin{thm}(\cite{Alon-Friedland-Kalai84})
\label{AFK2.2}
For $r \geq 3$ and $q \in \Z^+$, we define
\[ \mathcal{E}(r,q) := \begin{cases} (q-1)r + 1 & \text{$q$ odd} \\ (q-1)r - \frac{q}{2} +1 & \text{$q$  even} \end{cases}. \]
a) We have $\mathcal{E}(r,q) \leq E(r,q)$. \\
b) We have $\mathcal{E}(r,q) = E(r,q)$ if $q$ is a prime power.
\end{thm}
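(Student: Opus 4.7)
The plan is to prove part (b) by applying Corollary \ref{MAINCORZ}b) and part (a) by exhibiting an explicit $q$-atomic multigraph.

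For part (b), assume $q = p^k$ is a prime power and let $\mathcal{G}$ be a graph with $r$ vertices and $n$ edges. Encode a subgraph by $\mathbf{t} = (t_e)_{e} \in \{0,1\}^n$; the subgraph is $q$-divisible iff for every vertex $v$ the linear form $f_v(\mathbf{t}) := \sum_{e \ni v} t_e$ vanishes modulo $q$. When $q$ is odd, apply Corollary \ref{MAINCORZ}b) to the $r$ degree-$1$ congruences $f_v \equiv 0 \pmod{q}$ with target sets $B_v = \{0\}$: this produces a nonzero $\mathbf{t}$, i.e., a nonempty $q$-divisible subgraph, whenever $n > r(q-1)$, giving $E(r,q) \le r(q-1)+1 = \mathcal{E}(r,q)$.

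For $q$ even I exploit the handshake identity $\sum_v f_v(\mathbf{t}) = 2 \sum_e t_e$. Fixing a vertex $v_0$, the system $\{f_v \equiv 0 \pmod q\}_{v \in V}$ is equivalent to requiring
\[ f_v(\mathbf{t}) \equiv 0 \pmod{q} \text{ for } v \ne v_0, \qquad \sum_e t_e \equiv 0 \pmod{q/2}. \]
Now apply Corollary \ref{MAINCORZ}b) with $r-1$ degree-$1$ polynomials taken modulo $p^k = q$ and one degree-$1$ polynomial taken modulo $p^{k-1} = q/2$ (the latter vacuous when $q=2$). The total cost is $(r-1)(q-1) + (q/2-1) = (q-1)r - q/2$, so a nonempty $q$-divisible subgraph exists whenever $n > (q-1)r - q/2$, yielding $E(r,q) \le \mathcal{E}(r,q)$.

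For part (a), I construct a $q$-atomic multigraph on $r$ vertices with $\mathcal{E}(r,q)-1$ edges by taking the path $v_1 - v_2 - \cdots - v_r$ with each edge replaced by $q-1$ parallel copies, then adjoining $\varepsilon$ parallel edges between $v_1$ and $v_3$, where $\varepsilon = q-1$ for $q$ odd and $\varepsilon = q/2-1$ for $q$ even. The edge count is $(q-1)(r-1) + \varepsilon = \mathcal{E}(r,q)-1$. To check $q$-atomicity, let $a_i$ and $c$ denote the number of $v_i v_{i+1}$ edges and $v_1 v_3$ edges used in a hypothetical $q$-divisible subgraph. The condition $\deg v_r \equiv 0 \pmod q$ forces $a_{r-1} = 0$ (since $0 \le a_{r-1} \le q-1$), and proceeding backwards along the path forces $a_i = 0$ for all $i \ge 3$. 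The remaining triangle equations $a_1+c \equiv a_1+a_2 \equiv a_2+c \equiv 0 \pmod{q}$ give $a_1 \equiv a_2 \equiv -c \pmod{q}$ and $2a_1 \equiv 0 \pmod{q}$; for $q$ odd this forces $a_1 = a_2 = c = 0$, and for $q$ even the only nontrivial candidate $a_1 = a_2 = c = q/2$ is excluded by the constraint $c \le q/2-1$.

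The main obstacle is the even-$q$ half of part (b): a naive application of Corollary \ref{MAINCORZ}b) to the $r$ vertex congruences yields only $E(r,q) \le r(q-1)+1$, missing the $q/2$ improvement. The parity reduction above — trading one of the vertex congruences modulo $q$ for a single global congruence modulo $q/2$ — captures this saving, and is mirrored on the construction side by reducing the $v_1 v_3$ multiplicity from $q-1$ to $q/2-1$.
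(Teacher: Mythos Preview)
Your proof is correct. For part (b) your argument is essentially the paper's: both encode subgraphs by $\{0,1\}$-vectors, impose the $r$ linear vertex congruences, and in the even case exploit the handshake identity to save $q/2$. The paper phrases this saving group-theoretically (the incidence columns lie in the index-$2$ subgroup $G'(r,q)\cong C_q^{r-1}\oplus C_{q/2}$, then invokes $D=d$ for $p$-groups), whereas you substitute the congruence $\sum_e t_e\equiv 0\pmod{q/2}$ for the $v_0$-congruence and apply Corollary~\ref{MAINCORZ}b) directly; these are two packagings of the same reduction. For part (a) the paper does not reproduce a construction at all (it cites \cite{Alon-Friedland-Kalai84}), so your explicit path-plus-chord example is a fine self-contained replacement.
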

\noindent
The proof of part a) is by a simple direct construction of $q$-atomic graphs which we do not revisit here.  The proof of part b)
is by connection with the Davenport constant.  Or at least essentially: the term ``Davenport constant'' does not appear in \cite{Alon-Friedland-Kalai84}.  By making this connection explicit we can slightly sharpen their results.

\begin{thm}
\label{BIGAFK}
For $r \geq 3$, $q \in \Z^+$, we define
\[G(r,q) = \begin{cases} \bigoplus_{i=1}^r \Z/q\Z & q \text{ odd} \\
\bigoplus_{i=1}^{r-1} \Z/q\Z \oplus \Z/\frac{q}{2}\Z & q \text{ even} \end{cases} \]
and
\[ D(r,q) = D(G(r,q)). \]
a) We have
\begin{equation}
\label{DAVAFKEQ}
 d(G(r,q)) = \mathcal{E}(r,q) \leq E(r,q) \leq D(r,q).
\end{equation}
b) A graph with $r$ vertices and $n$ edges has at least
$2^{n+1-D(r,q)}$ $q$-divisible subgraphs. \\
c) \cite[Thm. 3.5]{Alon-Friedland-Kalai84} If $q$ is a prime power, then $E(r,q) = \mathcal{E}(r,q)$
and a graph with $r$ vertices and $n$ edges has at least $2^{n+1-\mathcal{E}(r,q)}$ $q$-divisible subgraphs.
\end{thm}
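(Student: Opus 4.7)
The plan is to encode graphs on $\{1,\ldots,r\}$ as sequences in $G(r,q)$ so that nonempty zero-sum subsequences correspond bijectively to nonempty $q$-divisible subgraphs, and then to invoke Olson's theorem and the definition of the Davenport constant. Given a graph $\Gamma$ on vertex set $\{1,\ldots,r\}$ with edge set $E$, assign to each edge $\alpha = \{i,j\}$ the vector $v_\alpha := \mathbf{e}_i + \mathbf{e}_j \in (\Z/q\Z)^r$, where $\mathbf{e}_1,\ldots,\mathbf{e}_r$ are the standard basis vectors. For any $E' \subseteq E$, the $i$th coordinate of $\sum_{\alpha \in E'} v_\alpha$ equals the degree of vertex $i$ in the subgraph with edge set $E'$, reduced mod $q$; hence $E'$ is $q$-divisible iff $\sum_{\alpha \in E'} v_\alpha = 0$.

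When $q$ is odd, $(\Z/q\Z)^r = G(r,q)$ and the encoding is immediate. When $q$ is even, every $v_\alpha$ has coordinate sum $2$, so all $v_\alpha$ lie in the subgroup $H = \{x \in (\Z/q\Z)^r : \sum_i x_i \equiv 0 \pmod{2}\}$. A short basis calculation -- for instance using $\mathbf{e}_1-\mathbf{e}_2,\ldots,\mathbf{e}_{r-1}-\mathbf{e}_r,2\mathbf{e}_r$ -- identifies $H$ with $\bigoplus_{i=1}^{r-1}\Z/q\Z \oplus \Z/(q/2)\Z = G(r,q)$. So in either parity we obtain a sequence $(v_\alpha)_{\alpha \in E}$ in $G(r,q)$ with the desired zero-sum property, and the zero-sum subsequences of $(v_\alpha)_{\alpha \in E}$ are literally indexed by the $q$-divisible subgraphs of $\Gamma$ (the empty subsequence giving the empty subgraph).

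Part (a) follows piece by piece: the equality $d(G(r,q)) = \mathcal{E}(r,q)$ is a routine substitution into (\ref{DAVENPORTEQ2}) after reading off the invariant factors of $G(r,q)$ in each parity; the middle inequality $\mathcal{E}(r,q) \leq E(r,q)$ is Theorem \ref{AFK2.2}(a); and the rightmost inequality $E(r,q) \leq D(r,q)$ is immediate from the encoding, since any graph with $r$ vertices and $n \geq D(r,q)$ edges yields a length-$n$ sequence in $G(r,q)$, hence (by the definition of $D$) a nonempty zero-sum subsequence, hence a nonempty $q$-divisible subgraph. Part (b) follows at once from Olson's theorem (Theorem \ref{NGTHM}(a)) applied to $(v_\alpha)_{\alpha \in E}$: it yields at least $\max\{1,\,2^{n+1-D(r,q)}\}$ zero-sum subsequences and thus at least this many $q$-divisible subgraphs. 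For part (c), when $q = p^k$ is a prime power, $G(r,q)$ is a $p$-group, so Olson's equality $D(G) = d(G)$ for $p$-groups (equivalently, the $B = \{0\}$ specialization of Theorem \ref{FATDAVENPORT}) collapses the chain in (\ref{DAVAFKEQ}) to $\mathcal{E}(r,q) = E(r,q) = D(r,q)$, and the bound in (b) simplifies to $2^{n+1-\mathcal{E}(r,q)}$.

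The only genuinely nontrivial point is the $q$-even setup: one must verify both that the image of $\alpha \mapsto v_\alpha$ lies in the proper subgroup $H \subsetneq (\Z/q\Z)^r$ and that $H$ is isomorphic to the abstractly-defined $G(r,q)$; this is the structural reason for the $-q/2$ correction in $\mathcal{E}(r,q)$. Once this is pinned down, the whole theorem is a rapid consequence of standard facts about Davenport constants, with the quantitative improvement in (b), (c) coming entirely from Olson's sharp count of zero-sum subsequences rather than from a new argument.
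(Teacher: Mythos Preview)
Your proof is correct and follows essentially the same route as the paper: encode edges as incidence vectors in $(\Z/q\Z)^r$, pass to the index-$2$ subgroup $H\cong G(r,q)$ when $q$ is even, and then invoke Theorem~\ref{NGTHM} for the count and Olson's $D=d$ for $p$-groups in part~(c). The only cosmetic difference is that the paper cites Theorem~\ref{NGTHM}(b) rather than (a) for part~(b); since $0$ is always a subsequence sum, either citation yields the bound.
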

\begin{proof}
a) The equality $d(G(r,q)) = \mathcal{E}(r,q)$ is immediate, and $\mathcal{E}(r,q) \leq E(r,q)$ is Theorem \ref{AFK2.2}a).  Let $\mathcal{G}$ be a graph with $r$ vertices and $n$ edges, and let
$A = (a_j^{(i)})_{1 \leq i \leq n,\ 1 \leq j \leq r }$ be its incidence matrix.
Put $\Z[\tt] = \Z[t_1,\ldots,t_n]$.  Then
\[I \subset \{1,\ldots,e\} \mapsto x^I \in \{0,1\}^e, \ x^I_i = \begin{cases}
1 & i \in I \\ 0 & i \notin I \end{cases} \]
induces a bijection between the $q$-divisible subgraphs of $\mathcal{G}$
and the solutions $x \in \{0,1\}^n$ to the system of linear congruences
\[ \forall 1 \leq j \leq r, \ \sum_{i \in I} t_j a_j^{(i)} \equiv 0 \pmod{q}\]
and thus to zero-sum subsequences of $\underline{a} = \{a^{(i)}\}_{i=1}^n$
in  $\bigoplus_{j=1}^r \Z/q\Z$.  Thus
\[ E(r,q) \leq D(\bigoplus_{j=1}^r \Z/q\Z). \]
When $q$ is odd, $D(r,q) = D(G(r,q))$.   When $q$ is even, the fact that every edge is incident to precisely two vertices can be exploited to improve the bound:
\begin{equation}
\label{AFK2.2EQ}
\forall 1 \leq i \leq e, \ \sum_{j=1}^n a_{j}^{(i)} = 2 \equiv 0 \pmod{q}.
\end{equation}
In group-theoretic terms, (\ref{AFK2.2EQ}) means that the terms of $\underline{a}$ lie in the subgroup
\[ G' = \{(x_1,\ldots,x_n) \in \bigoplus_{i=1}^n \Z/q \Z \mid \sum_j x_j \equiv 0 \pmod{2} \} \cong G(r,q). \]
Thus again we find $E(r,q) \leq D(r,q)$.  \\
b) We have seen that $q$-divisible subgraphs correspond bijectively
to zero-sum subsequences of a sequence $\underline{a}$ in a group isomorphic
to $G(r,q)$.  Apply Theorem \ref{NGTHM}b).  \\
c) Since $q$ is a prime power, $G(r,q)$ is a $p$-group and thus $D(G(r,q)) = d(G(r,q)) = \mathcal{E}(r,q)$.  The result now follows from parts a) and b).
\end{proof}

\begin{remark}
Alon-Friedland-Kalai conjecture that $E(r,q) \leq (q-1)r + 1$ for all $q \in \Z^+$ \cite[Conj. 3.7]{Alon-Friedland-Kalai84}.  This would follow if $d(G) = D(G)$ for all direct sums of copies of one finite cyclic group.  As mentioned above, this is an important open problem.  When $q$ is odd, this conjecture is equivalent to $E(r,q) = \mathcal{E}(r,q)$; when $q$ is even it gives
\[ (q-1)r-\frac{q}{2} + 1 \leq E(r,q) \leq (q-1)r + 1. \]
Again $E(r,q) = \mathcal{E}(r,q)$ would follow
from $d(G(r,q)) = D(G(r,q))$.  To the best of my knowledge, whether this equality
holds for all even $q$ is also open.
\end{remark}

\begin{remark}
 We have allowed graphs with multiple edges, and in fact the graphs used in the proof of Theorem \ref{AFK2.2}a) have multiple edges.  We have not allowed loops, but we could have, as we now discuss.  There are two possible conventions on how loops contribute
to the incidence matrix (equivalently, the degree of a vertex). \\ \indent $\bullet$ If we take the
\textbf{topologist's convention} that placing a loop at a vertex increases its degree by $2$, then Theorem \ref{BIGAFK} holds verbatim for graphs with loops. \\ \indent  $\bullet$ If we take the \textbf{algebraist's convention} that placing a loop at a vertex increases its degree by $1$, then the parity phenomenon of
(\ref{AFK2.2EQ}) is lost, and for even $q$ as well as odd we get $E(r,q) \leq
D( \bigoplus_{i=1}^r \Z/q\Z)$.  In this case, the graph with $q-1$ loops placed at every vertex is $q$-atomic and shows \[d(\bigoplus_{i=1}^r \Z/q\Z) = (q-1)r + 1 \leq E(r,q). \]  When $q$ is a prime power we get $E(r,q) = (q-1)r + 1$
whether $q$ is even or odd.
\end{remark}
\noindent
The connection with Davenport constants motivates us to explore a more general graph-theoretic setup.  We first present a generalization which gives a graph-theoretic interpretation to the Davenport constant of any finite commutative group.  The proofs are quite similar to those given above and are left to the reader.
\\ \\
Let $\qq = (q_1,\ldots,q_r) \in (\Z^+)^r$ with
$1 < q_1 \mid q_2 \mid \ldots q_r$ and put
\[G(\qq) = \bigoplus_{i=1}^r \Z/q_i \Z. \]
When $q_1$ is even, there is a surjective group homomorphism
\[ \Phi: G(\qq) \ra \Z/2\Z, \ (x^{(1)},\ldots,x^{(r)}) \mapsto \sum_{j=1}^r x^{(j)} \pmod{2}. \]
Thus $G'(\qq) := \Ker \Phi$
is an index $2$ subgroup of $G(\qq)$.  In this case we set $\qq' = (\frac{q_1}{2},q_2,\ldots,q_r)$.

\begin{lemma}
\label{AFK2.6}
If $q_1$ is even, then
\[ G'(\qq) \cong G(\qq'). \]
\end{lemma}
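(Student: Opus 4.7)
The plan is to write down an explicit isomorphism $\psi: G(\qq') \to G'(\qq)$ by specifying a system of generators for $G'(\qq)$ that manifestly realize the cyclic decomposition of $G(\qq')$. Because $q_1$ is even and $q_1 \mid q_i$ for all $i$, every $q_i$ is even, so the map $\Phi$ is well-defined and surjective. Hence
\[ \# G'(\qq) = \frac{\# G(\qq)}{2} = \frac{q_1}{2} q_2 \cdots q_r = \# G(\qq'), \]
so it suffices to construct a surjective homomorphism from $G(\qq')$ onto $G'(\qq)$.

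Let $e_1,\ldots,e_r$ denote the standard generators of $G(\qq)$, and consider the candidate elements $f_1 := 2 e_1$ and $f_i := e_i - e_1$ for $2 \leq i \leq r$. Each lies in $\Ker \Phi$: indeed $\Phi(2e_1) = 0$ and $\Phi(e_i - e_1) = 1 - 1 = 0$ modulo $2$. I would next verify the orders: since $q_1$ is even, $2e_1$ has order exactly $q_1/2$ in $\Z/q_1\Z$; and since $e_1$ and $e_i$ lie in distinct direct summands, $k(e_i - e_1) = 0$ iff $q_1 \mid k$ and $q_i \mid k$, i.e.\ iff $q_i \mid k$ (using $q_1 \mid q_i$), so the order of $f_i$ is $q_i$ for $i \geq 2$. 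These orders match exactly the orders of the standard generators of $G(\qq')$, so setting $\psi(f'_1) = f_1$ and $\psi(f'_i) = f_i$ on a set of standard generators $f'_1,\ldots,f'_r$ of $G(\qq')$ yields a well-defined homomorphism $\psi: G(\qq') \to G'(\qq)$.

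To conclude, I would show $\psi$ is surjective. Given any $(a_1,\ldots,a_r) \in G'(\qq)$, write
\[ (a_1,\ldots,a_r) = \Bigl(\sum_{i=1}^r a_i\Bigr) e_1 + \sum_{i=2}^r a_i (e_i - e_1). \]
The hypothesis $\sum_i a_i \equiv 0 \pmod{2}$ forces $(\sum_i a_i) e_1$ to be an integer multiple of $2 e_1 = f_1$, so $(a_1,\ldots,a_r)$ lies in the subgroup generated by $f_1,f_2,\ldots,f_r$, proving surjectivity. Since $\# G(\qq') = \# G'(\qq)$, a surjection between finite groups of equal cardinality is an isomorphism, and we are done. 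There is no serious obstacle here; the only point requiring care is the verification that $2e_1$ drops in order from $q_1$ to $q_1/2$ (which is precisely where the evenness of $q_1$ enters) and that $e_i - e_1$ keeps the full order $q_i$.
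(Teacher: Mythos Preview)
Your proof is correct. The paper does not give a proof of this lemma: it introduces the generalization containing Lemma~\ref{AFK2.6} with the remark that ``the proofs are quite similar to those given above and are left to the reader,'' so there is no argument in the paper to compare against; your explicit choice of generators $2e_1,\,e_2-e_1,\ldots,e_r-e_1$ is a clean and standard way to fill in the exercise.
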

\noindent
If $q_1$ is odd, we put $G'(\qq) = G(\qq)$.
\\ \\
Let $\mathcal{G} = (V,E)$ be a finite graph with $V = \{1,\ldots,r\}$.  A subgraph $\mathcal{G}' = (V,E')$ is $\qq$-\textbf{divisible} if for all $1 \leq j \leq r$, $q_j \mid \deg j$.  More generally, for $g = (g^{(j)})_{j=1}^r \in G(\qq)$, a subgraph $\mathcal{G}'$ is \textbf{of type} $(\qq,g)$ if for all $1 \leq j \leq r$ we have
\[ \deg j \equiv g^{(j)} \pmod{q_j}. \]
We then get the following generalization of Theorem \ref{BIGAFK}.


\begin{thm}
\label{AFK2.7}
Let $\qq \in (\Z^+)^n$, and let $\mathcal{G}$ be a finite graph with vertex set
$V = \{1,\ldots,r\}$ and $n$ edges, and let $g \in G(\qq)$.  Let $\underline{a}$ be the incidence matrix of $\mathcal{G}$, regarded as a sequence of length $n$ in $G'(\qq)$.
Then the number
of subgraphs of $\mathcal{G}$ of type $(\qq,g)$ is $N^{g}(G'(\underline{a}))$,
hence is $0$ or at least $2^{n+1-D(G'(\qq))}$.
\end{thm}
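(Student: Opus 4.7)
The plan is to convert the graph-theoretic count into a subsequence-sum count inside $G'(\qq)$ and then invoke Theorem \ref{NGTHM}b).

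First I would identify each subgraph $\mathcal{G}' = (V,E')$ of $\mathcal{G}$ with its characteristic vector $x \in \{0,1\}^n$ given by $x_e = 1 \iff e \in E'$. For every vertex $1 \leq j \leq r$, the degree of $j$ in $\mathcal{G}'$ equals $\sum_{e=1}^n a_j^{(e)} x_e$, where $a^{(e)} = (a_1^{(e)}, \ldots, a_r^{(e)})$ is the $e$-th column of the incidence matrix of $\mathcal{G}$, with the convention that a loop at $j$ contributes $2$ to $a_j^{(e)}$. Hence $\mathcal{G}'$ is of type $(\qq, g)$ if and only if $\sum_{e=1}^n x_e \, a^{(e)} = g$ in $G(\qq)$, i.e., the $\{0,1\}$-weighted subsequence of $\underline{a} = (a^{(1)}, \ldots, a^{(n)})$ selected by $x$ sums to $g$.

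Second, I would verify that $\underline{a}$ actually lies in $G'(\qq) \subseteq G(\qq)$. When $q_1$ is odd this is vacuous since $G'(\qq) = G(\qq)$ by definition. When $q_1$ is even, the ``two endpoints per edge'' identity $\sum_{j=1}^r a_j^{(e)} = 2 \equiv 0 \pmod{2}$ shows $\Phi(a^{(e)}) = 0$, hence $a^{(e)} \in \Ker \Phi = G'(\qq)$; Lemma \ref{AFK2.6} then identifies $G'(\qq)$ with $G(\qq')$. Combined with the previous step, this yields the identity of counts: the number of subgraphs of $\mathcal{G}$ of type $(\qq, g)$ equals $N^g(\underline{a})$ computed in $G'(\qq)$.

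Finally, apply Theorem \ref{NGTHM}b) to the length-$n$ sequence $\underline{a}$ in $G'(\qq)$: either $g \notin \Sigma(\underline{a})$ and $N^g(\underline{a}) = 0$, or else $N^g(\underline{a}) \geq 2^{n+1-D(G'(\qq))}$. I expect the main subtlety to be the parity argument in the second step: one must be careful with the loop/incidence conventions to ensure $\underline{a}$ genuinely lies in the index-$2$ subgroup $G'(\qq)$ in the even case. This is precisely what sharpens the lower bound from the naive $2^{n+1-D(G(\qq))}$ to $2^{n+1-D(G'(\qq))}$, mirroring the improvement from Theorem \ref{AFK2.2}a) to Theorem \ref{BIGAFK}b) in the uniform case $q_1 = \ldots = q_r$.
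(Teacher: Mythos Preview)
Your proposal is correct and follows exactly the approach the paper intends: the paper explicitly leaves this proof to the reader, stating that it is ``quite similar'' to the proof of Theorem \ref{BIGAFK}, and your three steps (bijection between subgraphs and $\{0,1\}$-vectors, the parity argument placing $\underline{a}$ in $G'(\qq)$ via $\sum_j a_j^{(e)} = 2$, and the appeal to Theorem \ref{NGTHM}b)) are precisely the ingredients of that proof transported to the $\qq$-graded setting.
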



\subsection{Divisibility in Weighted Graphs}
\textbf{} \\ \\ \noindent
Let $G = G(\qq) = \bigoplus_{i=1}^r \Z/q_i \Z$ be an (arbitrary) finite commutative $p$-group. We give ourselves a sequence $\AA = \{A_i\}_{i=1}^{\infty}$ of finite nonempty subsets of $\Z$.  For $1 \leq j \leq r$, let $B_j \subset \Z/q_j\Z$ be nonempty subsets, and put $B = \prod_{i=1}^r B_j$, viewed as a subset of $G$.
We will give a graph theoretic application of the quantities
$D_{\AA}^{B}(G)$ and $N_{\AA}^{B}(G,n)$ which further generalizes the results of the previous section.
\\ \\
Let $\mathcal{G}$ be a finite graph with vertex set $V = \{1,\ldots,r\}$ and
edge set $E = \{1,\ldots,n\}$.  Put $\AA = \prod_{i=1}^n A_i$.  An element
$a \in \AA$ may be viewed as giving an integer weight $a_i$ to each edge $i$ of $\mathcal{G}$: we call this data an \textbf{A-weighted subgraph} of $\mathcal{G}$.
(The case $A_i = \{0,1\}$ for all $i$ recovers the usual notion of a subgraph.)
For a weighted subgraph $(\mathcal{G},a)$ and a vertex $j \in V$, the
\textbf{weighted degree} of $j$ is
\[ d_{\AA}(j) = \sum_{i \sim j} a_i, \]
that is, the sum of the weights of the edges incident to $j$.  A weighted subgraph $(\mathcal{G},a)$ is $B$-\textbf{divisible} if for all $1 \leq j \leq r$, we have $d_{\AA}(j) \in B_j \pmod{\Z/q_j\Z}$.
\\ \\
This setup is designed so that the number of $\AA$-weighted $B$-divisible subgraphs
is equal to the number of $\AA$-weighted $B$-sum subsequences of the sequence $\underline{a}$ in $G'(\qq)$ corresponding to the incidence matrix.
Thus we may apply the results of $\S 3.2$ to deduce the following result.

\begin{thm}
\label{VERYBIGAFK}
a) Let $G(\qq)$, $A = \prod_{i=1}^n A_i$, $B = \prod_{j=1}^r B_j$ be as above, and let $\mathcal{G}$ be a finite graph
with vertex set $V = \{1,\ldots,r\}$ and edge set $E = \{1,\ldots,n\}$.  Let $\underline{a}$ be the incidence matrix of $\mathcal{G}$, viewed as a sequence of length $n$ in $G'(\qq)$.  Then the number of $B$-divisible
$\AA$-weighted subgraphs of $\mathcal{G}$ is $N_{\AA}^B(\underline{a})$.  \\
b) If each $A_i$ contains $0$ and at least one element not divisible
by $q_r = \exp G(\qq)$ and each $B_j$ contains $0$, then there
is a nonempty $\AA$-weighted $B$-divisible subgraph of $\mathcal{G}$
whenever $n \geq D_{\AA}^B(G'(\qq))$.  \\
c) Let $p$ be a prime, let $1 \leq v_1 \leq \ldots \leq v_r \in \Z$ and put
$\qq = (p^{v_1},\ldots,p^{v_r})$.  Let $A_1,\ldots,A_n,B_1,\ldots,B_r$
each have the property that no two distinct elements are congruent modulo $p$.
Then: (i) the number of $A$-weighted $B$-divisible subgraphs of $\mathcal{G}$
is either $0$ or at least
\[ \mm\left(\# A_1,\ldots,\# A_n;\sum_{i=1}^n \# A_i - \sum_{j=1}^r (p^{v_j} - \# B_j)\right). \]
(ii) Suppose that $0$ lies in $A_i$ for all $1 \leq i \leq n$ and $0$ lies
in $B_j$ for all $1 \leq j \leq r$.  Then there is a nonempty $A$-weighted
$B$-divisible subgraph if
\[ \sum_{i=1}^n \left(\# A_i -1\right) > \sum_{j=1}^r (p^{v_j} - \# B_j). \]
\end{thm}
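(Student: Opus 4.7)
My plan is to reduce all three parts to material already in hand via a single bijection between $\AA$-weighted subgraphs of $\mathcal{G}$ and $\AA$-weighted subsequences of the incidence sequence. Fix an ordering of the edges $E = \{1,\ldots,n\}$ and let $g_i \in G(\qq)$ denote the incidence vector of edge $i$: its $j$-th coordinate is $1$ if $i$ is incident to $j$ and $0$ otherwise. Because each edge meets exactly two vertices, $\sum_{j=1}^r (g_i)_j = 2$, so every $g_i$ already lies in $G'(\qq)$. For any weight vector $a = (a_1,\ldots,a_n) \in \AA$, the weighted degree at vertex $j$ is precisely the $j$-th coordinate of $\sum_{i=1}^n a_i g_i$, read modulo $q_j$. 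Hence the $\AA$-weighted subgraph $(\mathcal{G},a)$ is $B$-divisible if and only if $\sum_i a_i g_i \in B$, and this proves part (a).

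Part (b) is then immediate: under the stated hypotheses on the $A_i$ and $B_j$, the fat Davenport constant $D_{\AA}^{B}(G'(\qq))$ is well-defined, and when $n \geq D_{\AA}^{B}(G'(\qq))$, the length-$n$ sequence $\underline{a}$ in $G'(\qq)$ must admit a nonzero $\AA$-weighted subsequence with sum in $B$. Through the bijection of (a), this is exactly a nonempty $\AA$-weighted $B$-divisible subgraph of $\mathcal{G}$.

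For part (c), rather than routing through $G'(\qq)$ (which would require additional bookkeeping when $p = 2$, via Lemma \ref{AFK2.6}), I will apply Theorem \ref{MAINTHMZ} directly to the linear polynomial system
\[ f_j(t_1,\ldots,t_n) = \sum_{i=1}^n (g_i)_j \, t_i, \quad 1 \leq j \leq r. \]
By the correspondence established in (a), the $\AA$-weighted $B$-divisible subgraphs of $\mathcal{G}$ are exactly the $a \in \AA$ with $f_j(a) \in B_j \pmod{p^{v_j}}$ for each $j$. Since every $f_j$ has degree at most $1$, the hypotheses of Theorem \ref{MAINTHMZ} are met and we obtain the $\mm$-bound of (c)(i). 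Part (c)(ii) then follows at once from (c)(i) together with the Pigeonhole equivalence (\ref{PHP}), exactly as Theorem \ref{FATDAVENPORT}(b) follows from Theorem \ref{FATDAVENPORT}(a).

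There is no serious obstacle: the theorem is essentially a packaging statement, and almost all of the real content is concentrated in the bijective translation of part (a). The one mild subtlety is the choice in (c) to invoke the Main Theorem directly rather than pass through Theorem \ref{FATDAVENPORT}; this spares us from verifying that, when $p = 2$, the isomorphism $G'(\qq) \cong G(\qq')$ of Lemma \ref{AFK2.6} transports the product set $B = \prod_j B_j$ and the ``no two distinct elements congruent modulo $p$'' hypothesis on each $B_j$ into the exact input required by Theorem \ref{FATDAVENPORT}, with the quantity $\sum_j (p^{v_j} - \# B_j)$ preserved.
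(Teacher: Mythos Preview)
Your proposal is correct and matches the paper's own treatment, which simply observes that the setup is designed so that $\AA$-weighted $B$-divisible subgraphs correspond bijectively to $\AA$-weighted $B$-sum subsequences of the incidence sequence and then invokes the results of \S 3.2 (the fat Davenport constant and Theorem~\ref{FATDAVENPORT}). Your choice in part (c) to apply Theorem~\ref{MAINTHMZ} directly rather than route through Theorem~\ref{FATDAVENPORT} is a harmless variation---indeed Theorem~\ref{FATDAVENPORT} is itself proved by exactly that application---and your remark about sidestepping the $p=2$ bookkeeping is apt.
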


\begin{remark}
Theorem \ref{VERYBIGAFK}c)(ii) with $A_i = \{0,1\}$ recovers \cite[Thm. A.4]{Alon-Friedland-Kalai84}.
\end{remark}

\subsection{Polynomial Interpolation With Fat Targets}
\textbf{} \\ \\ \noindent
Our final application of the Main Theorem lies not in combinatorics but in algebra, specifically the problem of polynomial interpolation in commutative rings.

\begin{thm}
\label{LINEARMAINTHM}
Let $(\rr,\pp)$ be a finite, local principal ring with residue field $\F_q =
\rr/\pp$ and length $v$.  Let $f_1,\ldots,f_n \in \rr[t_1,\ldots,t_N]$ be an
$\rr$-linearly independent subset, and let $V = \langle f_1,\ldots,f_n \rangle$
be the $\rr$-module spanned by $f_1,\ldots,f_n$, so that every $f \in V$
may be uniquely written as
\[ f = \sum_{i=1}^n c_i(f) f_i, \ c_i(f) \in \rr. \]
Let $X = \{x_j\}_{j=1}^r \subset \rr^N$ be finite of cardinality $r$.  Let $A_1,\ldots,A_n,B_1,\ldots,B_r \subset \rr$ satisfy Condition (F).  For
$1 \leq j \leq r$, let $1 \leq v_j \leq v$.  \\
a) Let $\mathcal{S}$ be the set of $f \in V$ such that \\
(i) $c_i(f) \in A_i$ for all $1 \leq i \leq n$ and \\
(ii) $f(x_j) \in B_j \pmod{\pp^{v_j}}$ for all $1 \leq j \leq n$.  \\
Then $\# \mathcal{S} = 0$ or \[\# \mathcal{S} \geq \mm(\# A_1,\ldots,\# A_n;
\sum_{i=1}^n \# A_i - \sum_{j=1}^r (q^{v_j} - \# B_j)). \]
b) Suppose that $0$ is an element of each $A_i$ and $B_j$ and that
\[ \sum_{i=1}^n \# A_i - \sum_{j=1}^r (q^{v_j} - \# B_j) > n. \]
Then there is $0 \neq f \in \mathcal{S}$.
\end{thm}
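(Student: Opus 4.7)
The plan is to reduce this to a direct application of Theorem \ref{MAINTHMLOCAL} by treating the coefficients $c_1,\ldots,c_n$ as the polynomial variables. Specifically, I would introduce formal variables $c_1,\ldots,c_n$ and, for each $1 \leq j \leq r$, define the linear polynomial
\[ g_j(c_1,\ldots,c_n) = \sum_{i=1}^n f_i(x_j) \, c_i \in \rr[c_1,\ldots,c_n]. \]
Because the assignment $f \mapsto (c_1(f),\ldots,c_n(f))$ is an $\rr$-module isomorphism $V \to \rr^n$, the element $f = \sum c_i(f) f_i$ satisfies $f(x_j) = g_j(c_1(f),\ldots,c_n(f))$ for every $j$. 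Hence $\mathcal{S}$ is in bijection with
\[ \mathfrak{z}_{\aa,\bb} = \{(c_1,\ldots,c_n) \in \textstyle\prod_{i=1}^n A_i \mid g_j(c_1,\ldots,c_n) \in B_j \pmod{\pp^{v_j}} \ \forall 1 \leq j \leq r\}. \]

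For part (a), I would apply Theorem \ref{MAINTHMLOCAL} to the ring $\rr$, the subsets $A_1,\ldots,A_n,B_1,\ldots,B_r$ (which satisfy the required hypothesis by Condition (F)), the exponents $v_1,\ldots,v_r$, and the polynomials $g_1,\ldots,g_r$. Each $g_j$ has degree at most $1$ (using $\deg g_j \leq 1$ in the formula is harmless: the $\mm$ bound is only weakened if the actual degree is smaller). This yields
\[ \# \mathcal{S} = \# \mathfrak{z}_{\aa,\bb} \geq \mm\!\left(\# A_1,\ldots,\# A_n;\ \sum_{i=1}^n \# A_i - \sum_{j=1}^r (q^{v_j} - \# B_j)\right) \]
whenever $\mathcal{S}$ is nonempty, which is exactly the claim.

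For part (b), observe that since $0 \in A_i$ for each $i$ and $0 \in B_j$ for each $j$, the zero polynomial $f = 0$ lies in $\mathcal{S}$, so $\mathcal{S} \neq \varnothing$ and the lower bound of part (a) applies. The hypothesis
\[ \sum_{i=1}^n \# A_i - \sum_{j=1}^r (q^{v_j} - \# B_j) > n \]
is precisely the condition $N > n$ of (\ref{PHP}), so the $\mm$-quantity is at least $2$. Therefore $\mathcal{S}$ contains at least two elements, and since it contains $0$, at least one nonzero element.

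There is no real obstacle here: once one observes that coefficient tuples are themselves a legitimate domain of variables and that evaluation at $x_j$ is linear in those coefficients, the result is an immediate specialization of the Main Theorem in the lowest-degree regime. The only bookkeeping point worth flagging is that some $g_j$ may be identically zero (when $f_i(x_j) = 0$ for all $i$); using the weak bound $\deg g_j \leq 1$ is still valid and produces the stated inequality.
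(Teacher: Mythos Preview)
Your proposal is correct and follows essentially the same approach as the paper: identify $V$ with $\rr^n$ via the coordinate map $f \mapsto (c_1(f),\ldots,c_n(f))$, observe that evaluation at each $x_j$ becomes a linear polynomial in these coordinates, and apply the Main Theorem (Theorem \ref{MAINTHMLOCAL}); part (b) then follows from $0 \in \mathcal{S}$ together with (\ref{PHP}). Your explicit formula $g_j(c_1,\ldots,c_n)=\sum_i f_i(x_j)c_i$ and your remark on the harmless overestimate $\deg g_j \leq 1$ are a touch more explicit than the paper's phrasing, but the argument is the same.
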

\begin{proof}
a) Evaluation at $x \in X$ is a linear functional $L_i: \rr^X \ra \rr$.
Restricting each $L_i$ to $V$ gives a linear functional on $V$.
The basis $f_1,\ldots,f_n$ gives us an identification of $V$ with
$\rr^n$ under which $f = \sum_{i=1}^n c_i(f) f_i$ corresponds to
$(c_1(f),\ldots,c_n(f)) \in \rr^n$.  In this way we may view each $L_j$
as a linear polynomial on $\rr^n$.  For $f = (c_1(f),\ldots,c_n(f)) \in \rr^n$ the condition $L_j(f) \in
B_j \pmod{\pp^{v_j}}$ corresponds to $f(x_j) \in B_j \pmod{\pp^{v_j}}$.  So the Main Theorem applies. \\
b) The hypotheses imply that $0 \in \mathcal{S}$ and \\
$ \mm(\# A_1,\ldots,\# A_n;
\sum_{i=1}^n \# A_i - \sum_{j=1}^r (q^{v_j} - \# B_j)) \geq 2$.
\end{proof}

\begin{cor}
\label{TROIZANNIER}
For each $x \in \F_q^{\times}$, let $B_x$ be a subset of $\F_q$ containing $0$.
There is a nonzero polynomial $f \in \F_q[t_1]$ such that $f(0) = 0$,
$f(x) \in B_x$ for all $x \in \F_q^{\times}$ and
\[ \deg f \leq q - \frac{(\sum_{x \in \F_q^{\times}} \# B_x) - 1}{q-1}. \]
\end{cor}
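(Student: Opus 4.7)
The plan is to apply Theorem \ref{LINEARMAINTHM}b) with the base ring $\rr$ taken to be the field $\F_q$ itself (which is trivially a finite local principal ring of length $v = 1$ with residue field $\F_q$), in $N = 1$ variable. I would choose the natural basis $f_i = t_1^i$ for $i = 1, \ldots, n$, so that $V = \langle t_1, t_1^2, \ldots, t_1^n \rangle$ is exactly the space of polynomials in $\F_q[t_1]$ of degree at most $n$ with zero constant term, automatically enforcing $f(0) = 0$. I would take $X = \F_q^{\times} = \{x_1, \ldots, x_{q-1}\}$ (so $r = q - 1$), set $v_j = 1$ and $B_j = B_{x_j}$ for each $1 \leq j \leq q-1$, and set $A_i = \F_q$ (so $\# A_i = q$ and $0 \in A_i$) for each $1 \leq i \leq n$. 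Since $\F_q$ is a field, Condition (F) is automatic for every $A_i$ and $B_j$.

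With these choices, the sufficient condition in Theorem \ref{LINEARMAINTHM}b) becomes
\[ nq - \sum_{j=1}^{q-1} (q - \# B_{x_j}) > n. \]
Writing $S = \sum_{x \in \F_q^{\times}} \# B_x$, this rearranges to $n(q-1) > q(q-1) - S$, which for integer $n$ is equivalent to $n \geq q - \frac{S-1}{q-1}$. Choosing $n$ to be the least positive integer meeting this bound and invoking Theorem \ref{LINEARMAINTHM}b) yields a nonzero $f = \sum_{i=1}^n c_i t_1^i \in V$ with $c_i \in \F_q$ and $f(x) \in B_x$ for every $x \in \F_q^{\times}$. Such an $f$ satisfies $f(0) = 0$ and $\deg f \leq n \leq q - (S-1)/(q-1)$, which is the claimed bound.

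I do not foresee any real obstacle: all the substantive work is carried by Theorem \ref{LINEARMAINTHM}, and the corollary is essentially a direct translation of that theorem into the univariate polynomial interpolation language. The only point requiring any care is the arithmetic step passing from the general hypothesis of Theorem \ref{LINEARMAINTHM}b) to the displayed degree bound, in particular the conversion of the strict real inequality $n(q-1) > q(q-1) - S$ into the nonstrict integer inequality $n \geq q - (S-1)/(q-1)$; this is entirely routine.
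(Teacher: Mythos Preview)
Your proposal is correct and follows essentially the same route as the paper's proof: both apply Theorem \ref{LINEARMAINTHM}b) over $\rr = \F_q$ in one variable with $A_i = \F_q$, and both reduce to the same arithmetic inequality $n(q-1) > q(q-1) - S$. The only difference is cosmetic: the paper takes the basis $1, t_1, \ldots, t_1^n$ and enforces $f(0)=0$ by including $0$ as an evaluation point with target set $\{0\}$, whereas you build $f(0)=0$ directly into the space $V$ by taking the basis $t_1, \ldots, t_1^n$ and evaluating only on $\F_q^\times$; your version is arguably a bit cleaner.
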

\begin{proof}
Order the elements of $\F_q^{\times}$ as $x_1 = 0$,$x_2,\ldots,x_q$.  Apply Theorem \ref{LINEARMAINTHM}b)  with $\rr = \F_q$, $N = 1$, $f_1 = 1,f_2 = t_1,\ldots,f_{n+1} = t_1^n$, $X = \F_q$, $A_1 = \ldots = A_{n+1} = \F_q$, $B_1 = \{0\}$, $B_j = B_{x_j}$ for $2 \leq j \leq q$, $v_1 = \ldots = v_r = 1$: there is a nonzero polynomial of degree at most $n$ with $f(0) = 0$
and $f(x) \in S_x$ for all $x \in \F_q^{\times}$ if

\[ n+1 <  \sum_{i=1}^{n+1} \# A_i - \sum_{j=1}^q (q - \# B_j) = (n+1)q -
(q-1) - q(q-1) + \sum_{x \in \F_q^{\times}} \# B_x. \]
The latter inequality is equivalent to
\[ n \geq q - \frac{(\sum_{x \in \F_q^{\times}} \# B_x) + 1}{q-1}.  \qedhere \]
\end{proof}
\noindent
Corollary \ref{TROIZANNIER} is due to Troi and Zannier when $q = p$
is a prime \cite[Thm. 2]{Troi-Zannier97}.  Their proof is quite different: it uses Theorem \ref{TZB} and an auxiliary result using
integer-valued polynomials.  
Their argument seems not to carry over even to $\F_q$. 

\end{document}